\newtheorem{thm}[equation]{Theorem}
\newtheorem{lemma}[equation]{Lemma}
\newtheorem{prop}[equation]{Proposition}
\newtheorem{cor}[equation]{Corollary}
\theoremstyle{definition}
\newtheorem{remark}[equation]{Remark}
\newtheorem{notation}[equation]{Notation}
\newtheorem{example}[equation]{Example}
\newcommand{\mbb}[1]{\mathbb #1}
\newcommand{\mc}[1]{\mathcal #1}
\newcommand{\oper}[1]{\operatorname{#1}}
\newcommand{\wh}{\widehat}
\newcommand{\GL}{\oper{GL}}
\newcommand{\F}{\mc F}
\newcommand{\Spec}{\oper{Spec}}
\newcommand{\Br}{\oper{Br}}
\newcommand{\per}{\oper{per}}
\newcommand{\ind}{\oper{ind}}
\newcommand{\iso}{\overset{\sim}{\rightarrow}}
\newcommand{\cha}{\oper{char}}
\newcommand{\Mat}{\oper{Mat}}
\title{Weierstrass preparation and algebraic invariants}
\author{David Harbater, Julia Hartmann, and Daniel Krashen} 
\date{September 28, 2011; revised July 16, 2012}
\numberwithin{equation}{section}
\begin{document}
\maketitle

\begin{abstract}
We prove a form of the Weierstrass Preparation Theorem for normal algebraic
curves over complete discrete valuation rings.  While the more traditional algebraic form of Weierstrass Preparation applies
just to the projective line over a base, our version allows more general curves.
This result is then used to obtain applications concerning
the values of $u$-invariants, and on the period-index
problem for division algebras, over fraction fields of complete two-dimensional rings. Our approach uses patching methods and matrix factorization results that can be viewed as analogs of Cartan's Lemma.
\end{abstract}

\section{Introduction} \label{introduction}

The usual algebraic form of the Weierstrass Preparation Theorem (\cite{Bo:CA},
VII.3.9, Proposition~6) says in particular that if $T$ is a complete discrete
valuation ring then every element of $T[[x]]$ can be written as a product
$gu$, where $g \in T[x]$ and where $u$ is a unit in $T[[x]]$.  Thus every
divisor on $\Spec(T[[x]])$ is induced by a divisor on the projective line over
$T$.  There is also an algebraic form of the related Weierstrass Division
Theorem.

Just as the original, analytic form of Weierstrass Preparation and its
companion division theorem are important tools in the theory of several complex variables,
their algebraic forms have been useful in the study of arithmetic curves.  In
particular, versions of these results have been used in connection with
patching and Galois theory; e.g.\ see \cite{Vo}, Theorem~11.3 and Lemma~11.8.

Generalized versions of Weierstrass Preparation, which apply to smooth $T$-curves that need not be the
projective line, were shown in \cite{HH:FP}, Propositions~4.7 and~5.6, in
connection with patching; and this was used in \cite{HHK} to obtain
applications to quadratic forms and central simple algebras (see
Corollaries~4.17 and~5.10 of that paper).  Because of the
smoothness restriction, these forms of Weierstrass Preparation could only be
applied in the context of one-variable function fields over a complete
discretely valued field for which there is a smooth
projective model over $T$. 

The current paper generalizes Weierstrass Preparation further, to $T$-curves
that need not be smooth.  We then apply this
to quadratic forms and central simple algebras over related fields, obtaining
results about $u$-invariants and the period-index problem over fraction fields
of complete two-dimensional rings.  There has been much interest in these
problems; e.g.\ see the papers \cite{COP} and \cite{Hu}, which focused on the
local case in which the residue field is separably closed or finite.
(See also \cite{Hu10}.)
The method we use here is different, and yields applications with more general
residue fields, as well as applications to fraction fields of certain complete
rings that are not local. 

Weierstrass Preparation can be viewed as a factorization assertion for
elements of certain commutative rings.  Our approach here involves first
proving such an assertion for elements of matrix rings, and then specializing
to the $1 \times 1$ case.  
The factorization of matrices as products of matrices over smaller base rings
plays an important role in the framework of patching problems. Patching
permits the 
construction of algebraic objects over rings or fields of functions, given
local data together with agreements on appropriate overfields (e.g.\ see
\cite{Ha:CAPS} and \cite{HH:FP}).   
The matrix factorization result that we prove here is related to \cite{HH:FP},
Theorem~4.6, which was used there to obtain a smooth form of Weierstrass
Preparation;  and is also related to \cite{HHK}, Theorems~3.4 and~3.6.  In
obtaining results here for factorization of elements in the context of
singular $T$-curves, we show that there is an obstruction that does not always
vanish, concerning the reduction graph associated to the closed fiber.  But by
passing to split covers (see \cite{HHK11}, Section~5), we are able to prove a
Weierstrass-type assertion that suffices for our applications. 

The structure of this paper is as follows. 
In Section~\ref{factorization} we explain how patching relates 
to factorization, and then
prove factorization results for the types of rings and fields that arise in
the study of curves over a complete discretely valued field.  This is done first for matrices and then for elements.  The results of Section~\ref{factorization} are applied
in Section~\ref{Weierstrass} to prove our form of Weierstrass
Preparation, and then to obtain corollaries that allow one to pass from local
to global elements modulo an $n$-th power.  Applications are given in
Section~\ref{applications_qf_Br}, which concern quadratic forms
and the $u$-invariant
(Section~\ref{subsec_quadforms}) as well as the period-index problem for division
algebras (Section~\ref{subsec_csa}). 

This work was done in part while the authors were in residence at the Banff International Research Station.  We thank BIRS for its hospitality and resources, which helped advance the research in this paper.

\section{Patching and factorization} \label{factorization}

In this paper, we consider curves over complete discretely valued fields,
using the setup that was introduced in \cite{HH:FP}.  
The approach there used a form of patching to relate structures
on a function field $F$ to structures on certain overfields $F_\xi$ that arise
from geometry, including the realization of the former given the latter.  

A key tool in this study involved the factorization of matrices, to pass from
locally defined objects to more global ones.  This is related to a classical
result, Cartan's Lemma.  While matrix factorization is simplest over fields,
in this paper we will also need to factor matrices over rings, in order to
prove our form of Weierstrass Preparation.  We begin by recalling some
notation and terminology, beginning with the notion of a patching problem. 

For any ring $R$ let $\F(R)$ denote the category of free $R$-modules of finite
rank. Given rings $R \subseteq R_1,R_2 \subseteq R_0$ with $R = R_1 \cap R_2
\subseteq R_0$, a \textit{(free module) patching problem} for $(R,R_1,R_2,R_0)$
is an object in the $2$-fiber product category $\F(R_1) \times_{\F(R_0)}
\F(R_2)$. 
In other words, a patching problem consists of 
a tuple $(M_1,M_2,
M_0;\nu_1,\nu_2)$ of free $R_i$-modules $M_i$ together with isomorphisms
$\nu_i:M_i \otimes_{R_i} R_0 \to M_0$ for $i=1,2$.  In the category of patching problems 
for $(R,R_1,R_2,R_0)$, a morphism $(M_1,M_2,M_0;\nu_1,\nu_2) \to 
(M_1',M_2',M_0';\nu_1',\nu_2')$ consists of $R_i$-module homomorphisms $f_i:M_i \to M_i'$ ($i=0,1,2$) such that $f_0 \circ \nu_i = \nu_i' \circ (f_i \otimes \oper{id}): M_i \otimes_{R_i} R_0 \to M_0'$ for $i=1,2$.

In this situation, there is a functor $\beta: \F(R) \to\F(R_1) \times_{\F(R_0)}
\F(R_2)$ given by base change from $R$ to $R_i$.  That is, $\beta(M) = (M_1,M_2,
M_0;\nu_1,\nu_2)$, where $M_i = M \otimes_R R_i$ for $i=0,1,2$; and where $\nu_i$ is the natural map $(M \otimes_R R_i) \otimes_{R_i} R_0 \to M \otimes_R R_0$.
A \textit{solution} to a patching problem ${\mathcal M}$ is a free $R$-module
$M$ of finite rank whose image under $\beta$ is isomorphic to ${\mathcal M}$.

The following
result appeared in \cite[Prop.~2.1]{Ha:CAPS} and its proof. 

\begin{prop} \label{pp_conditions}
In the above situation, the following conditions are equivalent:
\renewcommand{\theenumi}{\roman{enumi}}
\renewcommand{\labelenumi}{(\roman{enumi})}
\begin{enumerate}
\item The base change functor $\F(R) \to \F(R_1) \times_{\F(R_0)}
\F(R_2)$ is an equivalence of tensor categories;\label{pp1}
\item every free module patching problem for $(R,R_1,R_2,R_0)$ has a
  solution;\label{pp2} 
\item for every $n\ge 1$, every element $A \in \GL_n(R_0)$ can be written as a
  product $A=BC$ with $B \in \GL_n(R_1)$ and $C \in \GL_n(R_2)$.\label{pp3} 
\end{enumerate}
Moreover, under these conditions, the solution to a free module patching
problem as above is given by $M=M_1 \times_{M_0} M_2$, where the fiber product
is taken with respect to the maps $\nu_1,\nu_2$. 
\end{prop}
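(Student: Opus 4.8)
The plan is to prove the chain of implications $(\ref{pp1}) \Rightarrow (\ref{pp2}) \Rightarrow (\ref{pp3}) \Rightarrow (\ref{pp1})$, and then verify the final assertion about $M_1 \times_{M_0} M_2$ along the way. The implication $(\ref{pp1}) \Rightarrow (\ref{pp2})$ is essentially a tautology: if $\beta$ is an equivalence of categories, then in particular it is essentially surjective, so every patching problem (object of the target category) is isomorphic to $\beta(M)$ for some $M \in \F(R)$, which is precisely the definition of having a solution. For $(\ref{pp2}) \Rightarrow (\ref{pp3})$, given $A \in \GL_n(R_0)$, I would build the patching problem $\mathcal{M} = (R_1^n, R_2^n, R_0^n; \nu_1, \nu_2)$ where $\nu_2 = \mathrm{id}$ and $\nu_1$ is multiplication by $A$ (identifying $R_i^n \otimes_{R_i} R_0 = R_0^n$). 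A solution $M$ is free of rank $n$ over $R$ (the rank is forced by comparing ranks after base change, since the $R_i$ are nonzero), so choosing an $R$-basis gives $R_i$-isomorphisms $\phi_i : R_i^n \iso M_i$ compatible with the $\nu_i$; the induced automorphism of $R_0^n$ obtained by going around the square then expresses $A$ as a product $BC$ with $B \in \GL_n(R_1)$, $C \in \GL_n(R_2)$ (coming from $\phi_1$ and $\phi_2^{-1}$ respectively, base-changed to $R_0$).

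The heart of the argument is $(\ref{pp3}) \Rightarrow (\ref{pp1})$, and this is also where the final statement gets established. The strategy is to show that under (\ref{pp3}) the functor $\beta$ is fully faithful and essentially surjective, with the quasi-inverse given by $\mathcal{M} \mapsto M_1 \times_{M_0} M_2$. First I would check that for any patching problem $\mathcal{M} = (M_1, M_2, M_0; \nu_1, \nu_2)$, the $R$-module $N := M_1 \times_{M_0} M_2$ (fiber product via $\nu_1, \nu_2$) is free of finite rank, and that the natural maps $N \otimes_R R_i \to M_i$ are isomorphisms. Freeness is proved by choosing bases: trivialize $M_1 \cong R_1^n$, $M_2 \cong R_2^n$, and transport $\nu_1, \nu_2$ to an element $A \in \GL_n(R_0)$; applying (\ref{pp3}) to write $A = BC$ and absorbing $B$ into the chosen basis of $M_1$ and $C^{-1}$ into that of $M_2$, one reduces to the case where the gluing isomorphism is the identity, in which case $N \cong R^n$ and the base-change maps are visibly isomorphisms. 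This simultaneously shows every patching problem has a solution (essential surjectivity) and identifies the solution as the fiber product. Full faithfulness then follows because a morphism of patching problems $\beta(M) \to \beta(M')$ induces compatible maps $M_i \to M_i'$ that restrict to an $R$-module map on fiber products, and conversely; one checks the two constructions are mutually inverse on Hom-sets by a diagram chase. Compatibility with tensor structure is routine: base change and fiber products of free modules commute with tensor products in the relevant sense.

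The main obstacle is the freeness argument in $(\ref{pp3}) \Rightarrow (\ref{pp1})$ — specifically, verifying carefully that after the reduction $A = BC$ the fiber product $R_1^n \times_{R_0^n} R_2^n$ taken with respect to the identity map really is $R^n$, which uses the hypothesis $R = R_1 \cap R_2$ inside $R_0$ (so that an $n$-tuple lying in both $R_1^n$ and $R_2^n$ lies in $R^n$), and that the base-change maps $R^n \otimes_R R_i \to R_i^n$ are the obvious isomorphisms. Everything else is bookkeeping with fiber product categories. I would also note explicitly that the equivalence of (\ref{pp1}), (\ref{pp2}), (\ref{pp3}) and the formula for the solution are exactly the content of \cite[Prop.~2.1]{Ha:CAPS} and its proof, so the above is really a recollection; I would keep the write-up brief and refer the reader there for full details.
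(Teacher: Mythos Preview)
Your proposal is correct and follows essentially the same approach as the paper: the key step is that a factorization $A=BC$ allows one to adjust bases so the gluing map becomes the identity, whence the fiber product $M_1 \times_{M_0} M_2$ is $R^n$ via $R = R_1 \cap R_2$, and the remaining implications are formal. The paper organizes the logic slightly differently (focusing on $(\ref{pp3}) \Rightarrow (\ref{pp2})$ as the core and treating the rest as straightforward), but the content is the same, and both accounts ultimately defer to \cite[Prop.~2.1]{Ha:CAPS}.
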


\begin{proof}  
For the sake of the reader's convenience, we explain the key part of the
proof of the proposition (and of Proposition~2.1 in \cite{Ha:CAPS}), viz.\
that (\ref{pp3}) implies (\ref{pp2}).  For this, given a free module patching
problem defined by modules $M_i$ and isomorphisms $\nu_i$, let $A \in
\GL_n(R_0)$ be the matrix corresponding to the isomorphism
$\nu_2^{-1}\nu_1:M_1 \otimes_{R_1} R_0 \to  M_2 \otimes_{R_2} R_0$, 
with respect to some bases of $M_1,M_2$ over $R_1,R_2$.  Let $B,C$ be as in
(\ref{pp3}).  Adjusting the chosen bases by $B,C$ respectively, the new matrix
for $\nu_2^{-1}\nu_1$ is the identity, and so the new bases have a common
image in $M_0$.  The free $R$-module generated by this basis then gives a
solution to the patching problem.

The converse implication is obtained by reversing this process.  Property
(\ref{pp1}) clearly implies condition (\ref{pp2}), and it is not hard to show
the converse of that implication, using that base change preserves tensor products.
\end{proof}

The geometric situation that we will consider in this paper is described in
the following notation (introduced in \cite{HH:FP}, Section~6; see also \cite{HHK}, Notation~3.3): 

\begin{notation} \label{notn}
Consider a complete discrete valuation ring $T$ with uniformizer $t$, fraction
field $K$, and residue field $k$; a one-variable function field $F$ over $K$,
with a {\em normal model} $\wh X$ of $F$ over $T$ (i.e.\ a projective normal
$T$-scheme with function field $F$); and a finite non-empty 
set of closed points $\mc
P$ of the closed fiber $X$ of $\wh X$, containing all the points where distinct
irreducible components of $X$ meet.  We let $\mc U$ be the set of connected
components of the complement of $\mc P$ in $X$.  For each $P \in \mc P$, we let
$R_P$ be the local ring of $\wh X$ at $P$; we write $\wh R_P$ for its
completion at its maximal ideal; and write $F_P$ for the fraction field of
$\wh R_P$.  For each subset $U$ of $X$ that is contained in an irreducible
component of $X$ and does not meet other components, let $R_U$ be the subring of $F$
consisting of the rational functions on $\wh X$ that are regular on $U$;
write $\wh R_U$ for its $t$-adic completion; and write $F_U$ for the fraction
field of $\wh R_U$.  Thus $F$ is a subfield of each $F_P$ and each $F_U$.
(In the case $U = \{P\}$, note that the field $F_{\{P\}}$ is strictly contained in $F_P$.)
A \textit{branch} of the closed fiber $X$ of $\wh X$ at a point $P \in \mc P$
is a height one prime $\wp$ of $\wh R_P$ that contains $t$. Let $\mc B$ denote
the set of all branches at points $P\in \mc P$. The contraction
of a branch $\wp$ to $R_P$ defines an irreducible component $X_0$ of $X$ (which is the
closure of a unique $U \in \mc U$), and we say that $\wp$ \textit{lies on}
$X_0$.  We let $R_\wp$ be the local ring of $\wh R_P$ at $\wp$; we write $\wh
R_\wp$ for its completion; and write $F_\wp$ for the fraction field of $\wh
R_\wp$.  For a triple $P,U,\wp$ where $\wp$ is a branch at a point $P\in \mc
P$ on the closure of $U\in \mc U$, there are
inclusions of $\wh R_P$ and $\wh R_U$ into $\wh R_{\wp}$. The induced
inclusions of $F_P$ and $F_U$
into $F_{\wp}$ are compatible with the inclusions
$F\hookrightarrow F_P,F_U$.

\end{notation}

In the situation above, we obtain the following factorization result for
matrices.   
Unlike Proposition~\ref{pp_conditions} (which is used in its proof), this
result considers a collection of matrices that are to be factored
simultaneously.
Related results, for fields rather than rings, appeared in \cite{HH:FP}
(Theorems~4.6 and~6.4) and in \cite{HHK} (Theorems~3.4 and 3.6).

\begin{prop} \label{mx_ring_factor} 
Let $\wh X$ be a normal connected projective $T$-curve, with $\mc P, \mc U,
\mc B$ as in Notation~\ref{notn}.
Let $n$ be a positive integer, and suppose that for every branch $\wp \in \mc
B$ we are given an element $A_\wp \in \GL_n(\wh R_\wp)$.  
\renewcommand{\theenumi}{\alph{enumi}}
\renewcommand{\labelenumi}{(\alph{enumi})}
\begin{enumerate}
\item  \label{big_factor}
There exist elements $A_P \in \GL_n(F_P)$ for each $P \in \mc P$,  
and elements $A_U \in \GL_n(\wh R_U)$ for each $U \in \mc U$,
such that for every branch $\wp \in \mc B$ at a point $P \in \mc P$ with $\wp$
lying on the closure of some
$U \in \mc U$, we have $A_\wp = A_P A_U\in \GL_n(F_{\wp})$ with respect
to the natural inclusions $\wh R_U, F_P, \wh R_\wp \hookrightarrow F_\wp$.
\item  \label{small_factor}
There exist elements $A_P' \in \GL_n(\wh R_P)$ for each $P \in \mc P$, and
elements $A_U' \in \GL_n(F_U)$ for each $U \in \mc U$, 
such that for every branch $\wp \in \mc B$ at a point $P \in \mc P$ with $\wp$
lying on the closure of some
$U \in \mc U$, we have $A_\wp = A_P' A_U'\in \GL_n(F_{\wp})$ with respect
to the natural inclusions $F_U, \wh R_P, \wh R_\wp \hookrightarrow F_\wp$.
\end{enumerate}
\end{prop}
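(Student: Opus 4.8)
The plan is to reduce this simultaneous factorization problem to the two-field factorization of Proposition~\ref{pp_conditions}(iii), applied iteratively across the reduction graph of the closed fiber, and then to globalize. First I would set up the combinatorial bookkeeping: associate to $\wh X$ its reduction graph $\Gamma$, whose vertices are the points of $\mc P$ together with the components $U \in \mc U$ (or alternatively the irreducible components of $X$), and whose edges are the branches $\wp \in \mc B$. The given data $(A_\wp)_{\wp \in \mc B}$ is thus a cochain on edges valued in $\GL_n$, and the assertions in (\ref{big_factor}) and (\ref{small_factor}) amount to writing each $A_\wp$ as a product of a vertex-contribution at the $\mc P$-end and a vertex-contribution at the $\mc U$-end, with the two parts living in the asymmetric pair of rings $(F_P,\wh R_U)$ or $(\wh R_P,F_U)$ respectively. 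I would prove (\ref{big_factor}) and (\ref{small_factor}) in parallel, since they are formally symmetric under swapping the roles of the ``point'' side and the ``component'' side and correspondingly swapping which ring keeps integrality.

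The technical engine is a local factorization at a single point. For a fixed $P \in \mc P$, the several branches $\wp_1,\dots,\wp_r$ at $P$ correspond to the height-one primes of $\wh R_P$ containing $t$, one for each component through $P$. Here I would invoke the two-sided (or one-sided) factorization lemma for the triple of rings attached to $P$ and its branches: $\wh R_P$, the product $\prod_j \wh R_{U_j}$ (completions along the components), and the product of the branch fields $\prod_j F_{\wp_j}$; these satisfy a patching/factorization condition of the type in Proposition~\ref{pp_conditions}, so $\GL_n$ over the ``overring'' $\prod_j F_{\wp_j}$ factors through $\GL_n(\wh R_P) \cdot \GL_n(\prod \wh R_{U_j})$, or with $F_P$ in place of $\wh R_P$ when we want the $\mc U$-factor integral. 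This is precisely the content analogous to \cite{HH:FP} Theorem~4.6 and \cite{HHK} Theorems~3.4 and~3.6, and I would use the relevant statement verbatim where available. Applying this at each $P$ independently peels off a factor $A_P$ (resp.\ $A_P'$) and leaves, for each branch $\wp$ on the closure of $U$, a residual matrix $\til A_\wp \in \GL_n(\wh R_U)$ (resp.\ $\in \GL_n(F_U)$) on the $\mc U$-side.

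The remaining task is to assemble the residual branch-data on each component into a single $A_U$. Fix $U \in \mc U$ with closure $\overline U$, an irreducible projective $T$-curve, and let $\wp$ range over the finitely many branches lying on $\overline U$, located at the points of $\mc P$ on $\overline U$; these correspond to completions of $R_U$ at finitely many closed points of the closed fiber $U' = \overline U \setminus \{\text{nodes}\}$, together with the $t$-adic data. One applies a one-component patching result: the ring $\wh R_U$, the semilocal completions $\wh R_{\wp}$ at the boundary branches, and the appropriate intermediate field satisfy a factorization condition, so the residual $(\til A_\wp)$ glue to $A_U \in \GL_n(\wh R_U)$ (resp.\ $A_U' \in \GL_n(F_U)$). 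Carrying through the identifications under the compatible inclusions $\wh R_U, F_P, \wh R_\wp \hookrightarrow F_\wp$ (resp.\ with $\wh R_P$), one checks that $A_\wp = A_P A_U$ (resp.\ $A_P' A_U'$) holds on each branch, completing the proof.

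I expect the main obstacle to be managing the \emph{compatibility at the branches} across the two peeling steps: after factoring at the points, one must be sure that the ``leftover'' matrix on each branch genuinely lies in $\GL_n(\wh R_U)$ (resp.\ $\GL_n(F_U)$) and not merely in $\GL_n(F_\wp)$, which requires the local factorization at $P$ to be set up with the integrality on the correct side — and this is exactly where the asymmetry between parts (\ref{big_factor}) and (\ref{small_factor}) is forced. A secondary subtlety is that the component $\overline U$ need not be smooth or even the projective line, so the globalization step along $\overline U$ cannot appeal to the classical Weierstrass/Cartan statement on $\mathbb P^1_T$; one needs the normal-model version of the factorization (again of the flavor of \cite{HHK}, Theorem~3.6) valid on an arbitrary normal projective $T$-curve with a chosen finite set of closed points. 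Once those two points are handled, the rest is bookkeeping with the inclusion diagram of rings in Notation~\ref{notn}.
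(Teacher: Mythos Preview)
Your proposal follows a genuinely different route from the paper's, and it contains a real gap.

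The paper does \emph{not} iterate along the reduction graph. Instead it first proves the statement in the single case $\wh X=\mbb P^1_T$, $\mc P=\{\infty\}$ (using the specific factorization results of \cite{HH:FP}, Theorems~4.6 and~5.4, together with \cite{Ha:FP}, Lemma~2, and \cite{HH:FP}, Proposition~3.2), and then handles an arbitrary normal $\wh X$ by choosing a finite morphism $f:\wh X\to\mbb P^1_T$ with $\mc P=f^{-1}(\infty)$, showing $f$ is flat, and using that the four rings $\bigl(R_V,\prod\wh R_U,\prod\wh R_{P,t},\prod\wh R_\wp\bigr)$ are \emph{free of rank $r=\deg f$} over the corresponding quadruple for $\mbb P^1_T$. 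A rank-$n$ patching problem upstairs then becomes a rank-$nr$ patching problem on $\mbb P^1_T$, which is solved by Case~1 and Proposition~\ref{pp_conditions}; one then checks the solution is free of the correct rank over $R_V$ (resp.\ $R_{\mc P}$).

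Your iterative plan has two related difficulties. First, the ``technical engine'' you invoke at a single $P$ --- factoring $\GL_n\bigl(\prod_j F_{\wp_j}\bigr)$ as $\GL_n(F_P)\cdot\GL_n\bigl(\prod_j \wh R_{U_j}\bigr)$ --- is not what \cite{HH:FP}, Theorem~4.6, or \cite{HHK}, Theorems~3.4 and~3.6, actually provide: the former is specific to the $\mbb P^1$ setup, and the latter are factorizations over the \emph{fields} $F_U$, not over the rings $\wh R_U$. The ring-level simultaneous factorization over a general normal model is precisely the content of the proposition you are trying to prove, so invoking it ``verbatim where available'' is circular. Second, and more structurally, your two-step scheme (peel at each $P$, then globalize along each $U$) runs into a coherence problem on graphs with cycles: after peeling at the points, each branch $\wp$ on $\overline U$ carries its own residual $\til A_\wp$, and making all of these equal to a single $A_U$ forces readjustment of the $A_P$'s, which then destroys the factorizations already made along the \emph{other} components through those $P$'s. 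On a tree one could induct, but $\Gamma(\wh X,\mc P)$ need not be a tree. The paper's finite-flat-pushforward to $\mbb P^1_T$ sidesteps this entirely, collapsing the whole graph to a single edge.
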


\begin{proof} 
\textit{Case 1}: We first consider the case when $\wh X = \mbb P^1_T$ and $\mc
P = \{\infty\}$ consists of the point $P$ at infinity on $\mbb P^1_k$. 

Then 
$\mc U = \{U\}$, where $U$ is the affine line over the residue field $k$ of
$T$.  There is also a single branch $\wp$ at $P$ (on the closure of $U$).   As
in \cite{HH:FP}, 
we write $R_\varnothing$ for the local ring of $\wh X$ at the generic point of
$X$, with completion $\wh R_\varnothing$ and fraction field
$F_\varnothing$. 

For the factorization in (\ref{big_factor}), by~\cite{HH:FP}, Theorem~5.4,
there exist $B_P \in \GL_n(F_P)$ and $A_\varnothing \in \GL_n(F_\varnothing)$
such that $A_\wp = B_P A_\varnothing$.  By~\cite{HH:FP}, Theorem~4.6 and the
comment just after that, there exist $C_P \in \GL_n(F_{\{P\}})$ and $A_U \in
\GL_n(\wh R_U)$ such that $A_\varnothing = C_P A_U$.  Thus $A_\wp = A_P A_U$
where $A_P := B_P C_P \in \GL_n(F_P)$.

For the factorization in (\ref{small_factor}), let $\bar A_\wp$ be the
reduction of $A_\wp$ modulo $t$.  Thus $\bar A_\wp \in
\GL_n\bigl(k((x^{-1}))\bigr)$, where $k((x^{-1}))$ is the fraction field of
the complete local ring $k[[x^{-1}]]$ at the point at infinity on the
projective $k$-line.  By~\cite{Ha:FP}, Lemma~2, we may factor $\bar A_\wp$ as
the product of an invertible matrix over $k[[x^{-1}]]$ and an invertible
matrix over $k(x)$.  
Let $C_P^{-1} \in \GL_n(\wh R_P)$ and 
$C_\varnothing^{-1} \in \GL_n(\wh R_\varnothing)$ be lifts of these matrices.
Thus the matrix 
$A_\wp' := C_P A_\wp C_\varnothing \in \GL_n(\wh R_\wp)$
is congruent to $1$ modulo $t$, as is $A_\wp'^{-1}$.  
By~\cite{HH:FP}, Lemma~5.3, the hypotheses
of~\cite{HH:FP}, Proposition~3.2 are satisfied, with $\wh R_\varnothing, \wh
R_P, \wh R_\wp$ playing the roles of $\wh R_1, \wh R_2, \wh R_0$ there and
taking $M_1 = \wh R_1$ there.  The conclusion of that proposition then says
that $A_\wp'^{-1} = B_\varnothing B_P$ with $B_\varnothing \in 
\Mat_n(\wh R_\varnothing) \cap \GL_n(F_\varnothing)$ and
$B_P \in \GL_n(\wh R_P)$.  Since $A_\wp',B_P \in \GL_n(\wh R_\wp)$, 
the matrix $B_\varnothing$ also lies in that group.  Hence $B_\varnothing$ actually 
lies in $\Mat_n(\wh R_\varnothing) \cap \GL_n(\wh R_\wp) 
= \GL_n(\wh R_\varnothing)$.  
Thus $C_\varnothing B_\varnothing \in \GL_n(\wh R_\varnothing)$.
By~\cite{HH:FP}, Theorem~4.6 and the
comment just after that, there exist 
$D_U \in \GL_n(F_U)$ and $D_P \in \GL_n(\wh R_{\{P\}})$ such that
$C_\varnothing B_\varnothing = D_U D_P$.  
The matrices $A_P' := C_P^{-1}B_P^{-1}D_P^{-1} \in \GL_n(\wh R_P)$ and $A_U' := D_U^{-1} \in \GL_n(F_U)$ then give the desired
factorization $A_\wp = A_P' A_U'$.

\smallskip

\textit{Case 2}: General case.

Choose a finite $T$-morphism $f:\wh X \to \mbb P^1_T$ such that $\mc P =
f^{-1}(\infty)$, using \cite[Proposition~3.3]{HHK11}.  Write $U' = \mbb A_k^1$
and $P' = \infty \in X' := \mbb P_k^1$, and let $\wp'$ be the branch at
infinity on the projective $k$-line.  Also write $F'$ for the function field
of $\mbb P_T^1$.  Let $r = [F:F']$ denote the degree of $f$.  

We claim that the morphism $f$ is flat.  For this, it suffices to show that for every closed point $Q \in X'$, the ring $S_Q \subseteq F$ is free over the local ring $R_Q \subseteq F'$, where $S_Q$ is the subring of $F$ consisting of the rational functions on $\wh X$ that are regular on $f^{-1}(Q)$.  
(That is, $\Spec(S_Q)=\wh X \times_{\mbb P^1_T} \Spec(R_Q)$.)
Now since $\wh X$ is normal, so is the ring $S_Q$.  Thus $S_Q$ satisfies Serre's condition that every prime ideal of codimension two has depth at least two~\cite[pp.~255-256, 462]{Eis}.  
But depth $\le$
codimension (\cite{Eis}, Proposition~18.2).  
So each maximal ideal of $S_Q$ has depth equal to its codimension,
viz.\ two.  
That is, $S_Q$ is a Cohen-Macaulay ring.  Moreover $R_Q$ is a regular local ring, the localizations of $S_Q$ at its maximal ideals all have the same dimension (viz.\ two), and $S_Q$ is finite over $R_Q$.  
Thus \cite[Corollary~18.17]{Eis} applies, and asserts that $S_Q$ is free over $R_Q$.  This proves the claim. 

For part (\ref{big_factor}), consider the localization $\wh R_{P',t}$ of $\wh
R_{P'}$ at the prime ideal $(t)$.  
This is equal to the intersections $F_{P'} \cap \wh R_{\wp'} = F_{P'} \cap \wh R_\varnothing$.  
Both
$\wh R_{U'}$ and $\wh R_{P',t}$ are subrings of $\wh R_{\wp'}$, with intersection  $\wh R_{U'} \cap \wh R_{P',t} = 
\wh R_{U'}  \cap F_{U'} \cap F_{P'} \cap \wh R_\varnothing = \wh R_{U'} \cap F'  = R_{U'}$.  Also, by Case~1, for any matrix 
$A_{\wp'} \in \GL_n(\wh R_{\wp'})$ there exist $A_{P'} \in \GL_n(F_{P'})$ and
$A_{U'} \in \GL_n(\wh R_{U'})$ such that $A_{\wp'} = A_{P'} A_{U'}$.  Since
$A_{\wp'},A_{U'} \in \GL_n(\wh R_{\wp'})$, it follows that the element
$A_{P'}$ lies in $\GL_n(F_{P'}) \cap \GL_n(\wh R_{\wp'})= \GL_n(\wh
R_{P',t})$.  So the quadruple of rings $(R_{U'},\wh R_{U'},\wh R_{P',t},\wh
R_{\wp'})$ satisfies condition~(\ref{pp3}) of Proposition~\ref{pp_conditions},
and hence also condition~(\ref{pp2}) on patching problems. 

Let $V \subset X$ be the union of the sets $U \in \mc U$, and let $R_V$ be the
subring of $F$ consisting of the rational functions on $\wh X$ that are
regular on $V$.  
Thus $V = f^{-1}(U') = \Spec(R_V/tR_V)$.
Since $f$ is flat, the ring $R_V$ is flat over $R_{U'}$; and hence $R_V/tR_V$ is a finite flat module over the ring $R_{U'}/tR_{U'} =
k[x]$.  But this last ring is a principal ideal domain.  Thus 
$R_V/tR_V$ is a free module of rank $r = \deg(f)$ over $R_{U'}/tR_{U'}$; 
and then 
$R_V$ is a free module of rank $r$ over $R_{U'}$ by~\cite[Proposition~II.3.2.5]{Bo:CA}.  Hence $\wh R_{U'} \otimes_{R_{U'}} R_V$, 
$\wh R_{P',t} \otimes_{R_{U'}} R_V$, and $\wh R_{\wp'}  \otimes_{R_{U'}} R_V$
are free modules of rank $r$ over the rings $\wh R_{U'},\wh R_{P',t},\wh
R_{\wp'}$ respectively, with compatible bases.

There are canonical isomorphisms $\wh R_{U'} \otimes_{R_{U'}} R_V \iso \prod_{U \in \mc U} \wh R_U$,
$\wh R_{\wp'} \otimes_{R_{U'}} R_V \iso \prod_{\wp \in \mc B} \wh R_\wp$,
$F_{\wp'} \otimes_{R_{U'}} R_V \iso \prod_{\wp \in \mc B} F_\wp$,
and $F_{P'} \otimes_{R_{U'}} R_V \iso F_{P'} \otimes_{F'} F \iso \prod_{P \in \mc P} F_P$, where the last isomorphism is by~\cite[Lemma~6.2(a)]{HH:FP}.  Since 
$\wh R_{P',t} = F_{P'} \cap \wh R_{\wp'}$, we have an exact sequence
\[0 \to \wh R_{P',t} \to F_{P'} \times \wh R_{\wp'} \to F_{\wp'}\]
of $R_{U'}$-modules,
where the first map is the diagonal inclusion and the second map is given by subtraction.  Tensoring with the free $R_{U'}$-module $R_V$, and using the above isomorphisms, we get an exact sequence 
\[0 \to \wh R_{P',t} \otimes_{R_{U'}} R_V \to \prod_{P \in \mc P} F_P \times \prod_{\wp \in \mc B} \wh R_\wp \to \prod_{\wp \in \mc B} F_\wp.\]
Thus we get canonical identifications 
\[\wh R_{P',t} \otimes_{R_{U'}} R_V = \prod_{P \in \mc P} F_P \,\cap\, \prod_{\wp \in \mc B} \wh R_\wp =
\prod_{P \in \mc P} \bigl(F_P \cap \prod_{\wp \in \mc B_P} \wh R_\wp\bigr),\] 
where
$\mc B_P$ is the set of branches at a given $P \in \mc P$  (i.e., the set of height
one primes in $\wh R_P$ containing $t$).  But for $P \in \mc P$, the intersection $F_P \cap\prod_{\wp \in \mc B_P} \wh R_\wp$ is equal to the semi-localization $\wh R_{P,t}$
of $\wh R_P$ at the set of
branches at $P$.  Thus we also get a canonical identification
$\wh R_{P',t}  \otimes_{R_{U'}} R_V\iso \prod_{P \in \mc P} \wh R_{P,t}$. Since the intersection of $\wh
R_{U'}$ and $\wh R_{P',t}$ in $\wh R_{\wp'}$ is $ R_{U'}$, the intersection of
$\prod_{U\in 
  \mc U} \wh R_U $ and  $\prod_{P\in \mc P} \wh R_{P,t}$ in $\prod_{\wp\in \mc
  B}\wh R_{\wp}$ equals $R_V$ by base change. 

A collection of elements $A_\wp \in \GL_n(\wh R_\wp)$, for $\wp \in \mc B$,
defines an element of $\GL_n(\prod_{\wp\in \mc B} \wh R_\wp)$.  So to prove
part~(\ref{big_factor}) it suffices to show that the quadruple of
rings \[(R_V\,,\,\prod_{U \in \mc U} \wh R_U\,,\,\prod_{P \in \mc P} \wh
R_{P,t}\,,\,\prod_{\wp \in \mc B} \wh R_{\wp})\] 
satisfies condition~(\ref{pp3}) of Proposition~\ref{pp_conditions}, or
equivalently condition~(\ref{pp2}).  Since these rings are respectively free
of rank $r$ over the rings in the quadruple $(R_{U'},\wh R_{U'},\wh
R_{P',t},\wh R_{\wp'})$, a free module patching problem
$(M_1,M_2,M_0;\nu_1,\nu_2)$ of rank $n$ with respect to the first quadruple
induces a free module patching problem of rank $nr$ with respect to the second
quadruple.  As shown above, the latter patching problem has a solution $M$. It
remains to show that the free $R_{U'}$-module $M$ of rank $nr$ is also a free
$R_V$-module of rank $n$ and that $M$ induces $M_1,M_2$ over the rings
$\prod_{U \in \mc U} \wh R_{U}$ and $\prod_{P \in \mc P} \wh R_{P,t}$. 

First observe that $M = M_1 \times_{M_0} M_2$, set-theoretically, by
Proposition~\ref{pp_conditions} above applied to the quadruple $(R_{U'},\wh
R_{U'},\wh R_{P',t},\wh R_{\wp'})$.  Since each $M_i$ is an $R_V$-module and
since the maps $\nu_i:M_i \to M_0$ are $R_V$-module homomorphisms, $M$ is an
$R_V$-module, compatibly.  Since $M$ is a solution to the patching problem
over $F'$, we have identifications $M_1 = \wh R_{U'} \otimes_{R_{U'}}  M = \wh
R_{U'} \otimes_{R_{U'}} R_V \otimes_{R_V} M = {\prod \wh R_U} \otimes_{R_V}
M$. 
That is,  $M$ induces $M_1$ over $\prod_{U \in \mc U} \wh R_{U}$.  The case of
$M_2$ is similar.  
Finally, since $M_1 = \prod \wh R_{U} \otimes M$ is free of
rank $n$ over $\prod \wh R_U$, it follows that $M_1/tM_1 = M/tM$ is free of
rank $n$ over $\prod \wh R_U/t\wh R_U = \prod  R_U/ tR_U = R_V/tR_V$; and thus $M$ is
free of rank $n$ over $R_V$ by~\cite{Bo:CA}, Proposition~II.3.2.5.  
This completes the proof of part~(\ref{big_factor}). 

The proof for (\ref{small_factor}) is similar, but with the roles of $U,P$
reversed.  We replace $\wh R_{P',t}$ with $\wh R_{U',t}$, the localization of
$\wh R_{U'}$ at the prime ideal $(t)$.  This is equal to each of the
intersections $F_{U'} \cap \wh R_{\wp'} = F_{U'} \cap \wh R_\varnothing$.  For
$U \in \mc U$ we similarly replace $\wh R_{P,t}$ with the localization $\wh
R_{U,t}$ of~$\wh R_U$ at its Jacobson radical, which is the unique height one
prime containing $t$.  
We obtain canonical isomorphisms as before.
The given factorization problem yields a free module
patching problem for the quadruple of rings   
\[(R_{\mc P}\,,\,\prod_{U \in \mc U} \wh R_{U,t}\,,\,\prod_{P \in \mc P} \wh
R_P\,,\,\prod_{\wp \in \mc B} \wh R_{\wp}),\]
where $R_{\mc P}$ is the 
subring of $F$ consisting of the rational functions on $\wh X$ that are
regular at the points of $\mc P$.
These rings are respectively free of rank $r$ over the rings in the quadruple
$(R_{P'},\wh R_{U',t},\wh R_{P'},\wh R_{\wp'})$, and are obtained by tensoring
those rings over $R_{P'}$ with $R_{\mc P}$.   
The given free module patching problem with respect to the first quadruple
induces a free module patching problem with respect to the second quadruple,
having a solution $M$ by Case~1 and Proposition~\ref{pp_conditions}, as in~(\ref{big_factor}).   

Proceeding as in the proof of~(\ref{big_factor}), it remains to show that $M$ is free of rank $n$ over $R_{\mc P}$.  
By hypothesis, $\prod \wh R_P \otimes M$ is free of
rank $n$ over $\prod \wh R_P$, and so is flat over $\prod \wh R_P$.  
Here $\wh R_P \otimes M$ is the $\frak m_P$-adic completion of $M_{\{P\}} := R_{\{P\}} \otimes M$ by~\cite[Theorem~III.3.4.3(ii)]{Bo:CA}; and thus $M_{\{P\}}$ is flat over $R_{\{P\}}$ 
by~\cite[Proposition~III.5.4.4]{Bo:CA} (with $A=B$ in the notation there).  Since $R_{\{P\}}$ is local and $M$ is finitely generated, it follows that 
the flat $R_{\{P\}}$-module
$M_{\{P\}}$ is free; moreover its rank is $n$, since this is the case modulo $\frak m_P$ (because $\prod \wh R_P \otimes M$ is free of
rank $n$ over $\prod \wh R_P$).  
Since $R_{\mc P}$ is a semi-local ring whose localizations are the rings 
$R_{\{P\}}$ for $P \in \mc P$, it follows by~\cite[Exercise~4.13]{Eis} (or by~\cite[Lemma~1.4.4]{BH}) that $M$ is free of rank $n$ over $R_{\mc P}$. 
\end{proof} 

\begin{cor}  \label{big_patch}
Suppose that for every $U \in \mc U$ we are given an element $a_U \in
F_U^\times$.  Then there exist $b \in F^\times$ and elements $c_U \in \wh
R_U^\times$ such that $a_U=bc_U \in F_U^\times$ for all $U \in \mc U$. 
\end{cor}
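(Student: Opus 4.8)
The plan is to derive this from the $n=1$ case of Proposition~\ref{mx_ring_factor}(\ref{big_factor}), after first normalizing the $a_U$ so that they become units on the relevant branches, and then to glue the resulting factors at the points of $\mc P$ into a single element of $F^\times$ using the field-patching property of this configuration. In outline: (i) produce $b_0 \in F^\times$ matching the vertical valuations of the $a_U$; (ii) apply the matrix factorization to the data $A_\wp := a_U$; (iii) recognize the local factors $A_P \in F_P^\times$ together with the remainders $\beta_U := a_U A_U^{-1} \in F_U^\times$ as the restrictions of a global $b \in F^\times$.

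\emph{Step 1: reduction to the case where each $a_U$ is a unit on every branch of $\bar U$.} For $U\in\mc U$ the closure $X_U:=\bar U$ is an irreducible component of $X$, and $\wh R_U$ is a normal domain with a unique height one prime $\wp_U$ containing $t$ (because $U$ is irreducible). I would set $m_U:=v_{\wp_U}(a_U)\in\mbb Z$ and look for $b_0\in F^\times$ with $v_{X_U}(b_0)=m_U$ for all $U$. Such a $b_0$ exists: the semilocal ring $S$ of $\wh X$ at the finite set of generic points $\eta_U$ of the components $X_U$ is a one‑dimensional normal Noetherian semilocal domain with fraction field $F$ (for instance by pulling back to $\wh X$ the local ring of $\mbb P^1_T$ at the generic point of $\mbb P^1_k$ along a finite morphism $\wh X\to\mbb P^1_T$ as in \cite{HHK11}), hence a principal ideal domain, so each of its maximal ideals is generated by an element $\pi_U\in S\subseteq F^\times$ with $v_{X_U}(\pi_U)=1$ and $v_{X_{U'}}(\pi_U)=0$ for $U'\ne U$; then $b_0:=\prod_U\pi_U^{m_U}$ works. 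Since $v_\wp|_{F_U}$ is a positive multiple of $v_{\wp_U}$ for every branch $\wp$ of $\bar U$ (both being discrete valuations of $F_U$ centered at the unique height one prime over $t$), and $v_{\wp_U}|_F=v_{X_U}$, dividing by $b_0$ makes each $a_U$ a unit in every $\wh R_\wp$; and a solution $(b,(c_U))$ for the data $(a_U b_0^{-1})_U$ yields the solution $(b_0 b,(c_U))$ for $(a_U)_U$. So I may assume from now on that $a_U\in\wh R_\wp^\times$ for every branch $\wp$ of every $\bar U$.

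\emph{Step 2: apply the matrix factorization and glue.} For a branch $\wp\in\mc B$ lying on $\bar U$, set $A_\wp:=a_U\in\wh R_\wp^\times=\GL_1(\wh R_\wp)$. Proposition~\ref{mx_ring_factor}(\ref{big_factor}) with $n=1$ then produces $A_P\in F_P^\times$ for $P\in\mc P$ and $A_U\in\wh R_U^\times$ for $U\in\mc U$ with $a_U=A_P A_U$ in $F_\wp^\times$ for every branch $\wp$ at $P$ on $\bar U$. Put $\beta_U:=a_U A_U^{-1}\in F_U^\times$. Because $a_U$ and $A_U$ both lie in the subfield $F_U$ of $F_\wp$, the relation $a_U=A_P A_U$ shows that $A_P$ and $\beta_U$ have the same image in $F_\wp$, for every branch $\wp$ at $P$ on $\bar U$. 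Thus the collection of the $A_P\in F_P^\times$ ($P\in\mc P$) together with the $\beta_U\in F_U^\times$ ($U\in\mc U$) is compatible along every branch of $\wh X$. By the patching property of $F$ for this configuration --- that $F$ is the set of elements of $\prod_{P\in\mc P}F_P\times\prod_{U\in\mc U}F_U$ having equal images in $F_\wp$ for every branch $\wp$, as in \cite[Theorem~6.4]{HH:FP} --- there is $b\in F^\times$ restricting to $A_P$ in each $F_P$ and to $\beta_U$ in each $F_U$. Taking $c_U:=A_U\in\wh R_U^\times$, the identity $b=\beta_U=a_U A_U^{-1}$ in $F_U^\times$ gives $a_U=b c_U$, as wanted.

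The step I expect to take the most care is the normalization in Step~1: producing a global $b_0\in F^\times$ realizing the prescribed valuations along all vertical prime divisors of $\wh X$, and checking that dividing by it turns every $a_U$ into a branch unit. This is the place where the obstruction alluded to in the introduction, concerning the reduction graph associated to the closed fiber, could in principle enter; but it cannot here, because the datum $a_U$ is given on all of $\wh R_U$ rather than merely near the branches, so the only constraints are on the finitely many, mutually independent component valuations $v_{X_U}$, and these are met by the principal ideal domain $S$. Everything after that is formal, given Proposition~\ref{mx_ring_factor} and the fact that $F$ is the equalizer of $\prod_P F_P\times\prod_U F_U\rightrightarrows\prod_\wp F_\wp$.
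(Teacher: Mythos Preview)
Your argument is correct and follows essentially the same route as the paper: a valuation-matching reduction to make each $a_U$ a unit along all branches (the paper uses the Krull domain approximation theorem from Bourbaki rather than the semilocal Dedekind/PID description of $S$, but the content is identical), followed by the $n=1$ case of Proposition~\ref{mx_ring_factor}(\ref{big_factor}) and gluing via the inverse-limit description of $F$. One small correction: the equalizer property you invoke is \cite[Proposition~6.3]{HH:FP}, not Theorem~6.4 (which is the matrix factorization result over the fields $F_\xi$).
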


\begin{proof}  For each $U \in \mc U$, let $\eta_U$ be its generic point and
  let $t_U \in F$ be a uniformizer at $\eta_U$.  Choose an affine open subset
  $\Spec(R)$ of $\wh X$ that contains the points $\eta_U$.  Then $R$ is
  Noetherian and integrally closed (since $\wh X$ is normal), and thus is a
  Krull domain by  \cite{Bo:CA}, Corollary to Lemma~1 in Section~VII.1.3.  By
  Theorem~4 of \cite{Bo:CA}, Section~VII.1.6, each $t_U$ defines an essential
  valuation of $R$; and then by \cite{Bo:CA}, Proposition~9 of
  Section~VII.1.5, there exists $s \in F^\times$ whose $t_U$-adic valuation is
  the same as that of $t_U$ for all $U \in \mc U$.   
Replacing each $a_U$ by $s^{-1}a_U$, we may assume that $a_U$ is a $t_U$-adic
unit for every $U$.  In particular, $a_U$ is a unit in $\wh R_\wp$ for every
branch $\wp \in \mc B$ lying on $U$. 

For each branch $\wp \in \mc B$, there is a unique $U \in \mc U$ such that
$\wp$ lies on $U$; let $c_\wp \in F_\wp^\times$ be the image of $a_U$.  
Note that $c_\wp \in \wh R_\wp^\times$ because of the above assumption on $a_U$.
Applying Corollary~\ref{mx_ring_factor}(\ref{big_factor}) to these elements
(viewed as $1 \times 1$ matrices), we obtain elements $c_P \in F_P^\times$ for
all $P \in \mc P$, and 
elements $c_U \in \wh R_U^\times$ for all $U \in \mc U$, such that $c_\wp =
c_P c_U \in F_\wp^\times$ for each branch $\wp$ on $U$ at $P$ (with respect to
the inclusions of $F_P$ and $F_U$ into $F_\wp$).  Set $b_P = c_P \in
F_P^\times$ for every $P \in \mc P$, and set $b_U = a_Uc_U^{-1} \in
F_U^\times$ for each $U \in \mc U$.   
For a branch $\wp \in \mc B$ at $P \in \mc P$ lying on $U \in \mc U$, we have
$b_P = c_P = a_Uc_U^{-1} = b_U$ in $\wh R_\wp^\times$.  Hence the elements
$b_P \in F_P^\times$ (for $P \in \mc P$), $b_U \in F_U^\times$ (for $U \in \mc
U$), and the induced elements of $F_\wp^\times$ together form an element of
the inverse system formed by the groups $F_U^\times$, $F_P^\times$,
$F_\wp^\times$; and so define an element $b$ of the inverse limit, which is
$F^\times$ by~\cite[Proposition~6.3]{HH:FP}. Finally, $a_U=b_Uc_U=bc_U$ with
respect to the above inclusions.   
\end{proof} 

The obvious analog of the above result with the roles of $\mc U$ and $\mc P$
interchanged would say that if we are given an element $a_P \in F_P^\times$
for every $P \in \mc P$, then there exist $b \in F^\times$ and elements $c_P
\in \wh R_P^\times$ such that $a_P=bc_P \in F_P^\times$ for all $P \in \mc P$.
But this assertion is false.   
The reason is that if $\wp, \wp' \in \mc B$ are branches lying on a common $U
\in \mc U$ at points $P,P' \in \mc P$, then the $\wp$-adic valuation
$v_{\wp}(a_P)$ of $a_P=bc_P$ 
would have to equal the $\wp'$-adic valuation $v_{\wp'}(a_{P'})$ of
$a_{P'}=bc_{P'}$; viz.\ both would have to have equal the $t_U$-adic
valuations of $b$ (where $t_U$ is a uniformizer at the generic point $\eta_U$
of $U$, as above).  But under this additional hypothesis on the given family
$\{a_P\}$, the analog holds:

\begin{cor}  \label{small_patch}
Suppose that for every $P \in \mc P$ we are given an element $a_P \in
F_P^\times$.  Suppose also that if $\wp,\wp' \in \mc B$ are branches at $P,P'
\in \mc P$ lying on a common $U \in \mc U$, then
$v_\wp(a_P)=v_{\wp'}(a_{P'})$.  Then  there exist $b \in F^\times$ and
elements $c_P \in \wh R_P^\times$ such that $a_P=bc_P \in F_P^\times$ for all
$P \in \mc P$. 
\end{cor}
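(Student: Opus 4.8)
The plan is to mimic the structure of the proof of Corollary~\ref{big_patch}, but using part~(\ref{small_factor}) of Proposition~\ref{mx_ring_factor} in place of part~(\ref{big_factor}), and to exploit the compatibility hypothesis on the valuations $v_\wp(a_P)$ to normalize the $a_P$ before applying the matrix factorization. The key point is that in the ``small'' factorization the $A_U'$ are allowed to lie in $\GL_n(F_U)$ while the $A_P'$ must lie in $\GL_n(\wh R_P)$, so we need to first arrange that the given $a_P$ become units at every branch; the hypothesis $v_\wp(a_P)=v_{\wp'}(a_{P'})$ is exactly what makes such a global normalization possible.

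First I would, for each $U\in\mc U$, pick a uniformizer $t_U\in F$ at the generic point $\eta_U$ of $U$ as in the proof of Corollary~\ref{big_patch}, and use the Krull-domain argument (via \cite{Bo:CA}, Section~VII.1, Propositions~9 and Theorem~4) to produce a single $s\in F^\times$ whose $t_U$-adic valuation equals the common integer $v_\wp(a_P)$ for every branch $\wp$ on $U$; here the hypothesis guarantees that this prescription of valuations along the $\eta_U$ is well-defined (a branch on $U$ at $P$ and a branch on the same $U$ at $P'$ both restrict to the valuation $v_{\eta_U}$, and the two prescribed values agree). Replacing each $a_P$ by $s^{-1}a_P$, I may then assume that $a_P$ has $\wp$-adic valuation zero for every branch $\wp$ at $P$, so that the image $c_\wp\in F_\wp^\times$ of $a_P$ at each branch $\wp$ at $P$ is in fact a unit in $\wh R_\wp^\times$.

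Next, applying Proposition~\ref{mx_ring_factor}(\ref{small_factor}) to the collection $\{c_\wp\}_{\wp\in\mc B}$ viewed as $1\times 1$ matrices, I obtain $c_P'\in\wh R_P^\times$ for all $P\in\mc P$ and $c_U'\in F_U^\times$ for all $U\in\mc U$ with $c_\wp=c_P'c_U'$ in $F_\wp^\times$ for every branch $\wp$ on $U$ at $P$. Now set $b_P=a_P(c_P')^{-1}\in F_P^\times$ and $b_U=c_U'\in F_U^\times$. For a branch $\wp$ at $P$ lying on $U$, we get $b_P=a_P(c_P')^{-1}=c_\wp(c_P')^{-1}=c_U'=b_U$ in $F_\wp^\times$. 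Hence the family $\{b_P\},\{b_U\}$, together with the induced elements of the $F_\wp^\times$, is an element of the inverse system of the $F_P^\times,F_U^\times,F_\wp^\times$, so by \cite[Proposition~6.3]{HH:FP} it comes from a single $b'\in F^\times$. Then $a_P=b_Pc_P'=b'c_P'$ with $c_P'\in\wh R_P^\times$, and taking $b=sb'$ (absorbing the normalization) gives $a_P=bc_P$ with $c_P:=c_P'\in\wh R_P^\times$ after undoing the substitution, as desired.

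The main obstacle I anticipate is the very first step: checking that the prescribed valuations along the generic points $\eta_U$ are consistent and realizable by a single global function $s$. One must verify that each branch $\wp$ at $P$ on the closure of $U$ induces on $R_U$ (equivalently on the local ring at $\eta_U$) precisely the essential valuation $v_{\eta_U}$, so that $v_\wp(a_P)$ depends only on $U$ once the compatibility hypothesis is imposed — and then invoke the Krull-domain approximation theorem to find $s$ with these prescribed values at the finitely many $\eta_U$ simultaneously. The rest is bookkeeping: tracking how the normalization by $s$ interacts with the final element $b$, and confirming that the patched element of $\varprojlim$ is legitimate, i.e.\ that $b_P,b_U$ genuinely agree in each $F_\wp^\times$, which follows formally from $c_\wp=c_P'c_U'$.
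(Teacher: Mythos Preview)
Your proposal is correct and follows essentially the same route as the paper's proof: normalize by a global $s\in F^\times$ realizing the common valuations $n_U$ along each component (via the Krull-domain approximation argument), then apply Proposition~\ref{mx_ring_factor}(\ref{small_factor}) in the $1\times 1$ case to the resulting branch units, and patch the pieces $b_P,b_U$ into a global $b$ using \cite[Proposition~6.3]{HH:FP}. The only cosmetic difference is that the paper applies the factorization to $c_\wp^{-1}$ so as to obtain $c_\wp=c_Uc_P$ with the $U$-factor on the left, whereas you factor $c_\wp=c_P'c_U'$ directly; since everything is commutative in the $1\times 1$ case, this is immaterial.
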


\begin{proof}  For each $U \in \mc U$, let $n_U$ be the common 
value of $v_\wp(a_P)$ for all $P \in \mc P$ lying on the closure of $U$ and
all branches $\wp \in \mc B$ at $P$ on $U$.   
By \cite{Bo:CA}, Proposition~9 of Section~VII.1.5, there exists $s \in
F^\times$ whose $t_U$-adic valuation is equal to $n_U$ for all $U$.  Thus
$v_\wp(s)=n_U$ for each branch $\wp \in \mc B$ on $U$. Replacing each $a_P$ by
$s^{-1}a_P$, we may assume that $a_P$ is a $t_U$-adic unit for every $P \in
\mc P$ and $U \in \mc U$ such that $P$ lies on the closure of $U$.  In
particular, $a_P$ is a unit in $\wh R_\wp$ for every branch $\wp \in \mc B$ at
$P$.

Let $c_\wp \in \wh R_\wp^\times$ be the image of $a_P$.  Applying
Corollary~\ref{mx_ring_factor}(\ref{small_factor}), in the $1 \times 1$ case,
to the elements $c_\wp^{-1}$, we obtain elements $c_P^{-1} \in \wh R_P^\times$
for all $P \in \mc P$, and elements $c_U^{-1} \in F_U^\times$ for all $U \in
\mc U$, such that $c_\wp^{-1} = c_P^{-1} c_U^{-1} \in F_\wp^\times$ (or
equivalently, $c_\wp = c_U c_P \in F_\wp^\times$) for each branch $\wp$ on $U$
at $P$, with respect to the inclusions of $F_P$ and $F_U$ into $F_\wp$.   
Set $b_U = c_U \in F_U^\times$ for $U \in \mc U$; and set $b_P = a_Pc_P^{-1}
\in F_P^\times$ for $P \in \mc P$.  Again we have that the elements $b_P \in
F_P^\times$ (for $P \in \mc P$) and $b_U \in F_U^\times$ (for $U \in \mc U$)
induce the same elements in $F_\wp^\times$, and so define an element $b \in
F^\times$.  Finally, $a_P=b_Pc_P=bc_P$ with respect to the above inclusions. 
\end{proof}

\begin{remark}
There are variants of the above two results for $n \times n$ matrices, using
that Corollary~\ref{mx_ring_factor} holds for matrices, and not just field
elements.   
It is convenient to state these variants using notations introduced in the
proof of Corollary~\ref{mx_ring_factor}, viz.\ $\wh R_{U,t}$ for the
localization of $\wh R_U$ at its height one prime containing $t$, and $\wh
R_{P,t}$ for the semi-localization of $\wh R_P$ at its branches, i.e., its
height one primes containing $t$. 
With this notation, the analog of Corollary~\ref{big_patch} then asserts that
if for each $U \in \mc U$ we are given a matrix $A_U \in \GL_n(\wh R_{U,t})$,
then there exist $B \in \GL_n(F)$ and $C_U \in \GL_n(\wh R_U)$ such that
$A_U=BC_U \in \GL_n(\wh R_{U,t})$ for all $U \in \mc U$.  Similarly, the
analog of Corollary~\ref{small_patch} says that if for every $P \in \mc P$ we
are given a matrix $A_P \in \GL_n(\wh R_{P,t})$, then there exist $B \in
\GL_n(F)$ and $C_P \in \GL_n(\wh R_P)$ such that $A_P=BC_P \in \GL_n(\wh
R_{P,t})$ for all $P \in \mc P$.  

The reason that we assume here that the given matrices lie in $\GL_n(\wh
R_{U,t})$ (resp.\ in $\GL_n(\wh R_{P,t})$) rather than simply in $\GL_n(F_U)$
(resp.\ in $\GL_n(F_P)$) is that the proofs of the preceding two results
involved a reduction step, which relied on the fact that every non-zero
element in a discretely valued field is a unit in the valuation ring
multiplied by a power of a given uniformizer.  The analog of this fact does
not hold in the $n \times n$ case. Assuming that the given matrices are
defined over the above smaller rings eliminates the need for that reduction
step. Note that it also eliminates the need for the extra hypothesis in
Corollary~\ref{small_patch} when~$n=1$.
\end{remark}

\section{Weierstrass preparation} \label{Weierstrass}

The factorization results of Section~\ref{factorization}  apply in particular
to the situation in which the elements are given just for some of the $U \in
\mc U$, or just for some of the points $P \in \mc P$.  Namely, we suitably
define the other elements, and then apply the result as stated.  In
this way, we obtain versions of the Weierstrass Preparation Theorem,
extending to the singular case results that had been proven in the smooth case in \cite{HH:FP} (see Remark~\ref{weier_rk}(\ref{weier_smooth_case}) below).

As before, we are in the context of Notation~\ref{notn}.  We have the
following version of the Weierstrass preparation theorem:

\begin{thm} \label{field_Weierstrass}
Let $F$ be a one-variable function field over the fraction field of a complete
discrete valuation ring~$T$, and let $\wh X$ be a normal model for~$F$
over~$T$. Let $\mc P,\mc U, \mc B$ be as in Notation~\ref{notn}.

\renewcommand{\theenumi}{\alph{enumi}}
\renewcommand{\labelenumi}{(\alph{enumi})}
\begin{enumerate} 

\item \label{big_patch_field}
If $U \in \mc U$ and $a \in F_U$, then there exist $b \in F$ and $c \in \wh
R_U^\times$ such that $a=bc$.

\item \label{branch_field}
If $\wp \in \mc B$ and $a \in F_\wp$, then there  exist $b \in F$ and $c \in
\wh R_\wp^\times$ such that $a=bc$.

\item \label{small_patch_field}
Let $P \in \mc P$ and $a \in F_P$.
Suppose that if $\wp,\wp' \in \mc B$ are branches at $P$ lying on a common $U
\in \mc U$, then $v_\wp(a)=v_{\wp'}(a)$.  Then there exist $b \in F$ and $c
\in \wh R_P^\times$ such that $a=bc$.

\end{enumerate}
\end{thm}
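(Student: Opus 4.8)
The plan is to deduce parts~(\ref{big_patch_field}) and~(\ref{small_patch_field}) from Corollaries~\ref{big_patch} and~\ref{small_patch} by applying those results to the single given element together with suitably chosen auxiliary data, and to prove part~(\ref{branch_field}) directly, exploiting that $\wh R_\wp$ is a discrete valuation ring. In each part the case $a=0$ is immediate (take $b=0$, $c=1$), so throughout we assume $a\neq 0$. For part~(\ref{big_patch_field}), given $U\in\mc U$ and $a\in F_U^\times$, I would set $a_U:=a$ and $a_{U'}:=1\in\wh R_{U'}^\times$ for all $U'\in\mc U$ with $U'\neq U$, and apply Corollary~\ref{big_patch} to this family; it produces $b\in F^\times$ and units $c_{U'}\in\wh R_{U'}^\times$ with $a_{U'}=bc_{U'}$, and $b$ together with $c:=c_U$ is the desired factorization.

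For part~(\ref{branch_field}) I would argue by hand. Let $\wp$ lie on the irreducible component $X_0$ of $X$, which is the closure of a unique $U\in\mc U$; let $\eta$ be the generic point of $X_0$, a codimension-one point of the normal scheme $\wh X$, and let $t_U\in F^\times$ be a uniformizer of the discrete valuation ring $\mc O_{\wh X,\eta}$. The crux is the identification $v_\wp|_F=v_\eta$. By Notation~\ref{notn} there are inclusions $R_P\subseteq\wh R_P\subseteq\wh R_\wp$, and the contraction of the branch $\wp$ to $R_P$ is the height-one prime $\mathfrak p_0$ with $(R_P)_{\mathfrak p_0}=\mc O_{\wh X,\eta}$; since any $x\in R_P\setminus\mathfrak p_0$ lies outside the height-one prime $\wp$ of $\wh R_P$, it is a unit in the discrete valuation ring $\wh R_\wp$, whence $\mc O_{\wh X,\eta}\subseteq\wh R_\wp$. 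Thus the rank-one valuation ring $\wh R_\wp\cap F$ of $F$ contains the rank-one valuation ring $\mc O_{\wh X,\eta}$, and two rank-one valuation rings of a field with one inside the other must coincide; so $v_\wp|_F=v_\eta$, and in particular $v_\wp(t_U)=1$. Then $b:=t_U^{\,v_\wp(a)}\in F^\times$ and $c:=a/b\in F_\wp^\times$ satisfy $v_\wp(c)=0$, so $c\in\wh R_\wp^\times$ and $a=bc$.

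For part~(\ref{small_patch_field}) the plan is to enlarge $a$ to a compatible family over all of $\mc P$ and invoke Corollary~\ref{small_patch}. By hypothesis, for a branch $\wp$ at $P$ the integer $v_\wp(a)$ depends only on the component of $X$ through $P$ on which $\wp$ lies; call it $n_U$ when that component is $\overline U$, and set $n_U:=0$ for the remaining (finitely many) $U\in\mc U$. As in the proofs of Corollaries~\ref{big_patch} and~\ref{small_patch} --- choosing an affine open of $\wh X$ containing all the points $\eta_U$ and applying the Krull-domain approximation theorem (\cite{Bo:CA}, Section~VII.1.5, Proposition~9) --- I would pick $s\in F^\times$ with $v_{\eta_U}(s)=n_U$ for all $U$. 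Using the identity $v_\wp|_F=v_{\eta_U}$ from part~(\ref{branch_field}), the element $s^{-1}a$ satisfies $v_\wp(s^{-1}a)=0$ for every branch $\wp$ at $P$. After replacing $a$ by $s^{-1}a$ (and multiplying the eventual $b$ by $s$), I may assume $v_\wp(a)=0$ for all branches $\wp$ at $P$. Setting $a_P:=a$ and $a_{P'}:=1$ for $P'\in\mc P\setminus\{P\}$, every member of the family is a unit at every branch, so the compatibility hypothesis of Corollary~\ref{small_patch} is trivially satisfied; that corollary yields $b_0\in F^\times$ and units $c_{P''}\in\wh R_{P''}^\times$ with $a_{P''}=b_0c_{P''}$, and $b:=sb_0$ together with $c:=c_P$ gives the required factorization of the original $a$.

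The step I expect to be the main obstacle is the valuation-theoretic input isolated in part~(\ref{branch_field}): showing that $v_\wp$ restricts on $F$ to the divisorial valuation $v_\eta$ of the component $X_0$, with ramification index one, so that $v_\wp(F^\times)=\mbb Z$. This is precisely what makes the power $t_U^{\,v_\wp(a)}$ available in~(\ref{branch_field}) and what forces every $v_\wp(a)$ down to $0$ after the normalization by $s$ in~(\ref{small_patch_field}); it depends essentially on the normality of $\wh X$, which guarantees both that $\mc O_{\wh X,\eta}$ is a discrete valuation ring and that $\wh R_\wp$ is a discrete valuation ring into which it embeds. Once this is in hand, the rest is routine bookkeeping layered on top of Corollaries~\ref{big_patch} and~\ref{small_patch}.
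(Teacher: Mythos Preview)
Your proof is correct and follows essentially the same approach as the paper: part~(\ref{big_patch_field}) is identical, part~(\ref{branch_field}) matches the paper's argument (with more detailed justification that a uniformizer at $\eta$ remains a uniformizer in $\wh R_\wp$), and part~(\ref{small_patch_field}) uses the same strategy of normalizing to branch-units and extending trivially before invoking Corollary~\ref{small_patch}. The only cosmetic difference is that in~(\ref{small_patch_field}) the paper does the Krull approximation locally in $R_P$ (choosing separate $s_i$ for each component through $P$) rather than globally on an affine open, but this is an implementation detail, not a change of method.
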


\begin{proof} 
We may assume without loss of generality that $a$ is nonzero in each case;
otherwise the assertion is trivially true with $b=0$.

(\ref{big_patch_field})  
Set $a_U=a$, and set $a_{U'}=1$ for each $U' \in \mc U$ other than $U$.  The
assertion is now immediate from Corollary~\ref{big_patch}.

(\ref{branch_field})
Consider the irreducible component $X_0$ of $X$ on which $\wp$ lies, and let
$s \in F$ be a uniformizer at the generic point of $X_0$.  Then $s$ is also a
uniformizer for $\wh R_\wp$.  Since $a \ne 0$ we may write $a = a's^m$ for some
$a' \in \wh R_\wp^\times$ and some $m\in \mbb N$.  Now let $b=s^m$ and $c=a'$.   

(\ref{small_patch_field}) 
Let $U_1,\dots,U_n \in \mc U$ be the elements of $\mc U$ whose closures contain $P$.  
For $i=1,\dots,n$, let $I_i \subset R_P$ be the height one prime of $R_P
\subset F$ corresponding to $U_i$.  By the normality hypothesis, the
Noetherian local ring $R_P$ is a Krull domain.  So for each $i$ there exists
$s_i \in R_P$ whose $I_i$-adic valuation is one, and which does not lie in any
other $I_j$ (\cite{Bo:CA}, Proposition~9 of Section~VII.1.5).   
Thus for each branch $\wp \in \mc B$ at $P$ lying on the closure of 
$U_i$, $v_\wp(s_i)=1$  and $v_\wp(s_j)=0$ for $j \ne i$.

Let
$m_i$ be the $I_i$-adic valuation of $a$.  Thus $v_\wp(a)=m_i$ for all
branches $\wp \in \mc B$ at $P$ lying on the closure of $U_i$, and the element
$a_P := a/\prod s_i^{m_i} \in F_P^\times$  
has $\wp$-adic valuation equal to zero for all $\wp \in \mc B$.  Set 
$a_{P'}=1 \in F_{P'}^\times$ for each $P' \in \mc P$ other than $P$.  These
elements together satisfy the hypotheses of Corollary~\ref{small_patch}, each
having valuation equal to zero at each respective branch.   
Thus $a_P=b'c$ for some $b' \in F^\times$ and $c \in \wh R_P^\times$.  Setting
$b=b'\prod s_i^{m_i} \in F$ yields the assertion. 
\end{proof}   

\begin{remark} \label{weier_rk}
\renewcommand{\theenumi}{\alph{enumi}}
\renewcommand{\labelenumi}{(\alph{enumi})}
\begin{enumerate} 
\item \label{weier_smooth_case}
In the case that the model $\wh X$ of $F$ is smooth over $T$, 
parts~(\ref{big_patch_field}) and~(\ref{small_patch_field}) of Theorem~\ref{field_Weierstrass} follow from results in~\cite{HH:FP}.  Namely, 
part~(\ref{big_patch_field}) is given by Corollary~4.8 of~\cite{HH:FP}.  For part~(\ref{small_patch_field}), by writing $a \in F_P$ as a ratio of elements in $\wh R_P$, we are reduced to the case 
that $a$ lies in $\wh R_P$.  Proposition~5.6 of~\cite{HH:FP} then allows us to write $a$ as the product of elements in $\wh R_P^\times$ and $F_{\{P\}}$.  Applying Corollary~4.8 of~\cite{HH:FP} (with $U = \{P\}$) to the latter element then yields the desired factorization of $a$, using that $\wh R_{\{P\}} \subset \wh R_P$.  

\item \label{weier_special_cases}
The extra hypothesis on $a$ in
Theorem~\ref{field_Weierstrass}(\ref{small_patch_field}) is trivially
satisfied if no two branches $\wp \in \mc B$ at $P$ lie on the same $U \in \mc
U$. In particular, it always holds in the smooth (unibranched) case considered
in the version of Weierstrass Preparation that appeared in \cite{HH:FP}. 
It is also satisfied if $a$ is a unit in $\wh R_\wp$ for each branch $\wp
\in \mc B$ at $P$ (since then the valuations $v_\wp(a)$ are each equal to
zero).  This last condition is equivalent to saying that $a \in \wh
R_{P,t}^\times$, in the notation used in the proof of Case~2 of
Proposition~\ref{mx_ring_factor}.  

\item \label{weier_example}
The hypothesis on $a$ is indeed needed in the general case
of Theorem~\ref{field_Weierstrass}(\ref{small_patch_field}), for a similar
reason that a related hypothesis was needed in Corollary~\ref{small_patch}. 

Namely, if $\wp,\wp' \in \mc B$ are branches at $P$ lying on the closure of 
a common $U \in
\mc U$, and if there is a global element $b \in F$ as asserted, then
$\wp$-adic and $\wp'$-adic valuations of $a$ must each be equal to the
$t_U$-adic valuation of $b$, where $t_U$ is a uniformizer at the generic point
of $U$.  Hence the existence of an element $b \in F$ as in the conclusion of the
theorem implies that $v_\wp(a) = v_{\wp'}(a)$. 
As an explicit example, we could take $\wh X$ to be the cover of the
projective $x$-line over $T$ given by $y^2-x^2(1+x)=t$, and $P$ to be the
point $x=y=0$ on the closed fiber.  (Here we assume the residue characteristic
is not two.) 
Let $z \in \wh R_P$ be a square root of $1+x$.  Then the two branches $\wp, 
\wp'$ at $P$ respectively correspond to $y=\pm xz$ along $t=0$. Both branches
lie on the unique irreducible component of the closed fiber.  Let $a=y-xz \in
F_P$.  Then $v_\wp(a)=1$ but $v_{\wp'}(a)=0$. 
Since these two valuations are unequal, the asserted element $b \in F$ cannot
exist, by the above argument, and the conclusion of
Theorem~\ref{field_Weierstrass}(\ref{small_patch_field}) does not hold for $a
\in F_P$. 
\end{enumerate}
\end{remark}

As a consequence, we obtain the following version of Lemma~4.16 of \cite{HHK} for the case that $\wh X$ is not necessarily smooth.

\begin{cor} \label{reduce_to_patches}
With notation as in Theorem~\ref{field_Weierstrass}, 
let $n>0$ be a natural number not divisible by the characteristic of the residue field of $T$.  
\renewcommand{\theenumi}{\alph{enumi}}
\renewcommand{\labelenumi}{(\alph{enumi})}
\begin{enumerate} 
\item \label{root_big}
For $U \in \mc U$ and $a \in F_U$,
there exist elements $b \in F$ and $c \in F_U^\times$ such that $a=bc^n$.   
\item \label{root_branch}
For $\wp \in \mc B$ and $a \in
F_\wp$, there exist elements $b \in F$ and $c \in F_\wp^\times$
such that $a=bc^n$. 
\item \label{root_small} 
Let $P \in \mc P$ and $a \in F_P$.   
If $v_\wp(a)=v_{\wp'}(a)$ for each pair of branches $\wp,\wp' \in \mc B$ at
$P$ lying on the closure of a common $U \in \mc U$, then there exist elements $b \in F$ and $c \in F_P^\times$ such
that $a=bc^n$.  
\end{enumerate}
\end{cor}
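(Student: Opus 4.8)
The plan is to derive each part of the corollary from the corresponding part of Theorem~\ref{field_Weierstrass}, together with Hensel's lemma; the hypothesis $\cha k\nmid n$ enters only to guarantee that $n\notin tT$, so that $n$ is a unit in $T$, hence in each of the rings $\wh R_U$, $\wh R_\wp$, $\wh R_P$ and in all their quotients. I would first dispose of the case $a=0$ (take $b=0$, $c=1$), and then apply part~(\ref{big_patch_field}), (\ref{branch_field}), or~(\ref{small_patch_field}) of Theorem~\ref{field_Weierstrass}, according as we are in case~(\ref{root_big}), (\ref{root_branch}), or~(\ref{root_small}), to write $a=bc$ with $b\in F$ and $c$ a unit in $\wh R_U$, $\wh R_\wp$, or $\wh R_P$ respectively. (In case~(\ref{root_small}) the hypothesis on the valuations $v_\wp(a)$ is exactly what is needed to invoke Theorem~\ref{field_Weierstrass}(\ref{small_patch_field}).) It then suffices to produce $b'\in F^\times$ which is a unit in the same ring and for which $c/b'$ is an $n$-th power in the relevant fraction field; then $a=(bb')(c/b')$ is the desired factorization. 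For the unit $c$ I would use the standard form of Hensel's lemma: if $S$ is complete with respect to an ideal $I$ and $n\in S^\times$, then $1+I\subseteq(S^\times)^n$ (lift a root of $Z^n-u$ from $Z\equiv 1\bmod I$, since the derivative $n$ is a unit). Applying this with $(S,I)$ equal to $(\wh R_U,t\wh R_U)$, $(\wh R_\wp,\frak m_\wp)$, or $(\wh R_P,t\wh R_P)$, it is enough to choose $b'$ so that $c$ and $b'$ have the same image, \emph{modulo $n$-th powers}, in the appropriate residue ring.

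In case~(\ref{root_big}) this is immediate: $\wh R_U/t\wh R_U=R_U/tR_U$ is a quotient of $R_U\subseteq F$, so I lift the reduction $\bar c\in(R_U/tR_U)^\times$ to some $b'\in R_U\subseteq F$; as $t$ lies in the Jacobson radical of $\wh R_U$ and $\bar c$ is a unit, $b'\in\wh R_U^\times$, and $c/b'\equiv 1\pmod t$. In case~(\ref{root_small}) one cannot lift $\bar c$ from $F$ at the level of $\wh R_P/t\wh R_P$ (which is only the completion of $R_P/tR_P$); but $\wh R_P/t\wh R_P$ is complete local with $n$ invertible, so it is enough to match $c$ and $b'$ modulo $n$-th powers in the residue field $\kappa(P)$, and this is possible because the local ring $R_P\subseteq F$ surjects onto $\kappa(P)$. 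In both cases $c/b'$ then lies in $1+\frak m$ for the appropriate maximal ideal, hence is an $n$-th power.

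Case~(\ref{root_branch}) is the one I expect to require real work. Here $\wh R_\wp$ is a complete discrete valuation ring, and the task is to realize the class of $c$ in $\kappa(\wp)^\times/(\kappa(\wp)^\times)^n$ by an element of $F$. The key is to identify the residue field: because $\wh X$ is normal, the intersection $F\cap\wh R_\wp$ (inside $F_\wp$) is the discrete valuation ring $(R_P)_{\wp\cap R_P}$, whose residue field is the function field $\kappa_0:=\oper{Frac}\bigl(R_P/(\wp\cap R_P)\bigr)$ of the irreducible component of $X$ on which $\wp$ lies; and $\kappa(\wp)=\oper{Frac}(\wh R_P/\wp)$ is the completion of $\kappa_0$ at the place corresponding to the branch $\wp$, with residue field a finite extension $\kappa(P)'$ of $\kappa(P)$. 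Granting this, I would invoke the elementary fact that, for a place of a function field, the function field surjects onto the units of its completion modulo $n$-th powers: a uniformizer at the place accounts for the value group $\mathbb Z/n$, while the local ring at the place surjects onto $\kappa(P)'$ (the remaining contribution $1+\frak m$ consisting of $n$-th powers since $n$ is invertible). Since $(F\cap\wh R_\wp)^\times=\bigl((R_P)_{\wp\cap R_P}\bigr)^\times$ surjects onto $\kappa_0^\times$, I can first pick $\beta\in\kappa_0^\times$ mapping to the class of $\bar c$ in $\kappa(\wp)^\times/(\kappa(\wp)^\times)^n$, then lift $\beta$ to $b'\in(F\cap\wh R_\wp)^\times\subseteq F^\times$; now $c/b'\in\wh R_\wp^\times$ has residue an $n$-th power, so $c/b'\in(\wh R_\wp^\times)^n$ and we are done. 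The main obstacle, accordingly, is this identification of $\kappa(\wp)$ with the completion of the function field $\kappa_0$, compatibly with the inclusion $F\hookrightarrow F_\wp$, together with the verification that $F\cap\wh R_\wp$ is the expected valuation ring — both of which rest on the normality of $\wh X$.
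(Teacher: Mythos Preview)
Your proof is correct and follows the same overall strategy as the paper: apply the corresponding part of Theorem~\ref{field_Weierstrass} to write $a=bc$ with $c$ a unit in the relevant complete ring, then use Hensel's lemma (with $n$ invertible) to adjust $c$ by an element of $F$ so that what remains is an $n$-th power.

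The paper carries this out explicitly only for part~(\ref{root_big}) and then asserts that parts~(\ref{root_branch}) and~(\ref{root_small}) are proved ``identically.'' Your treatment is in fact more careful than this. For part~(\ref{root_small}) you correctly work modulo the maximal ideal $\frak m_P$ rather than modulo $t$, which is necessary since $\wh R_P/t\wh R_P$ is only the completion of $R_P/tR_P$ and hence cannot be lifted to $R_P\subset F$; reducing to $\kappa(P)=R_P/\frak m_P$ fixes this. For part~(\ref{root_branch}) you recognize a genuine additional issue: the residue field $\kappa(\wp)$ of $\wh R_\wp$ is not a quotient of any subring of $F$, so one cannot lift $\bar c$ directly. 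Your workaround---identifying $\kappa(\wp)$ with the completion of the function field $\kappa_0$ of the relevant component at the place determined by the branch, and then using that $\kappa_0^\times$ surjects onto $\kappa(\wp)^\times/(\kappa(\wp)^\times)^n$ (uniformizer plus residue-field lift plus Hensel)---is correct and is exactly what is needed to make the argument go through. The identifications you flag as ``the main obstacle'' (that $F\cap\wh R_\wp$ is the DVR at the generic point of the component, and that $\kappa(\wp)$ is the indicated completion) are standard consequences of the normality of $\wh X$ and excellence, so your proof is complete.

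In short: same approach, but your version of~(\ref{root_branch}) supplies a step that the paper's ``identically'' elides. You could streamline slightly by noting, as the paper does, that the $a=a_1/a_2$ reduction is unnecessary once one applies Theorem~\ref{field_Weierstrass} directly to $a$ in the fraction field---which is exactly what you do.
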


\begin{proof}
Again, all statements are trivially true  if $a=0$, so we assume
that $a\neq 0$ in each case.
For part~(\ref{root_big}) we proceed as in the proof of the analogous case of
Lemma~4.16 of \cite{HHK}, but using
Theorem~\ref{field_Weierstrass}(\ref{big_patch_field}) instead of the global
Weierstrass Preparation Theorem in the smooth case (\cite{HH:FP},
Proposition~4.7).  Namely, write $a=a_1/a_2$ with $a_i \in \wh R_U$ and $a_2
\ne 0$.  By Theorem~\ref{field_Weierstrass}(\ref{big_patch_field}), there
exist $b_i \in F^\times$ and $c_i \in \wh R_U^\times$ such that $a_i=b_ic_i$
for $i=1,2$.  The reduction $\bar c_i \in \wh R_U/t\wh R_U = R_U/tR_U$  
of $c_i$ modulo the uniformizer $t$ of $T$ may be lifted to an element $c_i'
\in R_U \subset F$.  Since $c_i/c_i' \equiv 1$ modulo $t\wh R_U$, 
the element $c_i/c_i'$ has an $n$-th root $c_i'' \in \wh R_U$,
by Hensel's Lemma (\cite{Bo:CA}, Corollary~1 to Theorem~III.4.5.2).
Here
$c:= c_1''/c_2''$ lies in $F_U^\times$ and $b := b_1c_1'/b_2c_2'$ lies in
$F$.  These elements then satisfy $a=bc^n$. 

Parts~(\ref{root_branch}) and~(\ref{root_small}) are proved identically,
except that parts~(\ref{branch_field}) and~(\ref{small_patch_field}) of
Theorem~\ref{field_Weierstrass} are used instead of
part~(\ref{big_patch_field}).  
\end{proof}

The extra hypothesis in
Theorem~\ref{field_Weierstrass}(\ref{small_patch_field}) and
Corollary~\ref{reduce_to_patches} can be dropped if we
allow ourselves to pass to an appropriate finite cover of $\wh X$, as we show
in Proposition~\ref{cover_Weierstrass} and its corollary below.  

First, recall from \cite[Section~5]{HHK11} that a \textit{split cover} of $\wh
X$ is a finite morphism $h:\wh X' \to \wh X$ of normal projective $T$-curves
such that the fiber over every point $P$ of $\wh X$ other than the generic
point consists of a disjoint union of copies of $P$.  Such covers are
automatically \'etale; and for each $\xi \in \mc P \cup \mc U$, the pullback
of $\wh X'$ to $\Spec(F_\xi)$ consists of a disjoint union of finitely many
copies of $\Spec(F_\xi)$.   
Also recall (from \cite[Section~6]{HHK11}) that we may associate to $\wh X$ a
\textit{reduction graph} $\Gamma(\wh X, \mc P)$ whose vertices are in
bijection with $\mc P \cup \mc U$, and whose edges are in bijection with the
set of branches $\mc 
B$.  Namely, the edge associated to $\wp \in \mc B$ connects the vertices
associated to $P \in \mc P$ and $U \in \mc U$ when $\wp$ is a branch at $P$
lying on the closure of $U$.  For each split cover $\pi:\wh X' \to \wh X$,  
taking $\mc P' = \pi^{-1}(\mc P)$,
the associated reduction graphs define a covering space $\Gamma(\wh X',\mc P')
\to \Gamma(\wh X,\mc P)$.

If the set $\mc P$ contains every closed point of $X$ at which $X$ is not unibranched,
then this correspondence is a lattice isomorphism between split covers of $\wh
X$ and connected finite covering spaces of $\Gamma(\wh X, \mc P)$ (Proposition~6.2
of~\cite{HHK11}). In particular, there are no non-trivial split covers of $\wh X$ if the
reduction graph is a tree.

\begin{prop} \label{split_cover_tree}
Let $F$ be a one variable function field over the fraction field of a complete
discrete valuation ring $T$, and let $\wh X$ be a normal model for $F$ over $T$.
Let $\mc P$ be a finite non-empty subset of the closed fiber $X$ that contains
all the points where distinct components of $X$ meet.
Then there is a connected split cover $h:\wh X' \to \wh X$ such that distinct
branches in $h^{-1}(\mc B)$ at a common point of $h^{-1}(\mc P)$ lie on
the closures of different components in $h^{-1}(\mc U)$.  
Moreover if $\Gamma(\wh X,\mc P)$ is not a tree, then this cover can be chosen
to be abelian of arbitrarily high degree.
\end{prop}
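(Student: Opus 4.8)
The plan is to reinterpret the desired conclusion entirely in terms of the reduction graph $\Gamma = \Gamma(\wh X,\mc P)$, and then reduce to the topology of covering spaces. The key observation is the following: a point $Q \in h^{-1}(\mc P)$ violates the asserted separation property precisely when two edges of $\Gamma(\wh X', h^{-1}(\mc P))$ incident to the vertex $Q$ lie in the same edge-orbit over $\Gamma$, i.e., when they both map to the same edge of $\Gamma$. Equivalently, writing $\pi:\Gamma' \to \Gamma$ for the covering map induced by $h$, the bad situation at $Q$ is that there exist two distinct edges $e_1, e_2$ of $\Gamma'$ at $Q$ with $\pi(e_1) = \pi(e_2)$. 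So what we must arrange is a connected finite covering space $\pi:\Gamma' \to \Gamma$ in which no two distinct edges at a common vertex have the same image; equivalently, the covering restricted to the star of each vertex is injective on edges — but since a covering map is always locally injective on a neighborhood of a vertex \emph{inside an edge}, the only way two edges at $Q$ can have the same image is if they are the two half-edges of a single edge $e$ of $\Gamma$ whose two endpoints both lift to $Q$; i.e., $e$ is a loop in $\Gamma$ but its lift is still a loop at $Q$, or more precisely $e$ connects $P$ to $U$ in $\Gamma$ and both the $P$-branch and the corresponding $U$-branch at that edge lift to branches at the same point $Q$. Since split covers correspond bijectively to connected finite covers of $\Gamma$ once $\mc P$ is enlarged to contain all non-unibranched points (Proposition~6.2 of \cite{HHK11}), it suffices to produce such a graph covering.

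First I would reduce to the case that $\mc P$ contains every closed point of $X$ that is not unibranched, by enlarging $\mc P$ if necessary; this only subdivides edges of $\Gamma$ and adds vertices of valence two, and it does not change whether $\Gamma$ is a tree, nor does it affect the final conclusion about $h$. Next, I would take a connected finite covering $\pi:\Gamma' \to \Gamma$ of sufficiently high degree so that its injectivity radius exceeds $2$: concretely, pass to a covering in which every closed loop of combinatorial length $\le 2$ in $\Gamma$ fails to lift to a closed loop, e.g.\ a covering through which $\pi_1(\Gamma)$ acts with no short relations — for instance a covering corresponding to a finite-index normal subgroup of the free group $\pi_1(\Gamma)$ avoiding the finitely many "short" conjugacy classes. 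In such a covering, no edge of $\Gamma$ has both endpoints lifting to a common vertex of $\Gamma'$, which is exactly the separation property we want. By Proposition~6.2 of \cite{HHK11}, this $\Gamma'$ comes from a connected split cover $h:\wh X' \to \wh X$, giving the first assertion.

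For the "moreover" clause, suppose $\Gamma$ is not a tree, so $\pi_1(\Gamma)$ is a non-trivial free group, hence surjects onto $\mbb Z/m\mbb Z$ for every $m$. I would choose the covering $\Gamma' \to \Gamma$ above to be, in addition, \emph{abelian} — i.e.\ Galois with abelian deck group — and of degree a large multiple of $m$; this is possible because abelian covers of $\Gamma$ are classified by $H_1(\Gamma,\mbb Z) \cong \mbb Z^{\mathrm{rk}}$ with $\mathrm{rk} = b_1(\Gamma) \ge 1$, and by taking the cover corresponding to a deep enough congruence subgroup $N H_1(\Gamma,\mbb Z)$ one kills all short loops while keeping the cover abelian of degree $N^{\mathrm{rk}}$, which is arbitrarily large. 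The corresponding split cover $h$ is then abelian of arbitrarily high degree, as required.

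The step I expect to be the main obstacle is verifying precisely that a "large injectivity radius" covering has the stated edge-separation property and translating it cleanly back through the dictionary of \cite{HHK11}, Proposition~6.2 — in particular, checking that after the harmless enlargement of $\mc P$ one really is in the regime where that proposition gives a bijection, and that the degree and abelianness of the graph cover transfer faithfully to $h$. The graph-theoretic core (finding a finite cover with no short loops, abelian when $b_1 \ge 1$) is standard and routine; the care needed is all in the bookkeeping of branches versus edges and half-edges.
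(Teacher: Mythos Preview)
Your overall strategy coincides with the paper's: enlarge $\mc P$ so that Proposition~6.2 of \cite{HHK11} applies, translate the problem into one about the reduction graph, and build an (abelian) finite cover of $\Gamma$ in which certain short cycles do not lift. However, your graph-theoretic translation of the separation condition is wrong, and the error propagates into the confused discussion of half-edges and loops.

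The bad configuration is \emph{not} ``two distinct edges $e_1',e_2'$ of $\Gamma'$ at $Q$ with $\pi(e_1')=\pi(e_2')$''. As you yourself note, a covering map is bijective on stars, so this never occurs; and your attempted resolution via self-loops cannot work because $\Gamma$ is bipartite (every edge joins a $\mc P$-vertex to a $\mc U$-vertex) and hence has no self-loops. The correct translation is: two distinct edges of $\Gamma'$ incident to a common $\mc P$-vertex $P'$ that also share their other endpoint $U'$; i.e., a \emph{multi-edge} in $\Gamma'$. Such a pair maps to two \emph{distinct} edges $e_1,e_2$ of $\Gamma$ joining $P=\pi(P')$ to the same $U=\pi(U')$, and the multi-edge upstairs says exactly that the closed walk $P\xrightarrow{e_1}U\xrightarrow{e_2}P$ lifts to a closed walk at $P'$. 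This is the paper's set $\Delta$ of length-$2$ loops.

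Once the translation is fixed, your ``injectivity radius $>2$'' idea becomes precisely the paper's argument: each such $2$-loop gives the $1$-cycle $e_1-e_2$, which is nonzero in $H_1(\Gamma,\mbb Z)\cong\mbb Z^r$ since $\Gamma$ is a $1$-complex; choose $n$ so that none of these finitely many classes lies in $n\mbb Z^r$; and take the abelian cover corresponding to $n\mbb Z^r$. This simultaneously yields the separation property and, when $r\ge 1$, an abelian cover of arbitrarily large degree. The bookkeeping you flag (that enlarging $\mc P$ preserves non-tree-ness, and that the Galois structure transfers via Proposition~6.2 of \cite{HHK11}) is exactly what the paper invokes.
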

 
\begin{proof} 
By enlarging the set $\mc P$, we may assume without loss of generality that it
contains all the points at which $X$ is not unibranched (see Hypothesis~5.4 of
\cite{HHK11}).

The fundamental group of the graph $\Gamma = \Gamma(\wh X,\mc P)$ is a free
abelian group of rank $r\geq 0$, whose abelianization is
$H_1(\Gamma,\mbb Z) \simeq \mbb Z^r$.  Consider the finite set $\Delta$ of
loops in $\Gamma$ that consist of exactly two vertices (corresponding to some
$P \in \mc P$ and $U \in \mc U$) and two edges (corresponding to a choice of 
two distinct branches at $P$ on the closure of $U$).  The elements of $\Delta$ induce
finitely many non-trivial elements of the abelianization $\mbb Z^r$, using
the identification $H_1(\Gamma,\mbb Z) \simeq \mbb Z^r$ and the fact that
$\Gamma$ is a one-dimensional simplicial complex. Choose $n$ sufficiently
large so that none of these elements lies in $n\mbb Z^r$, and let $N$ be the
inverse 
image of $n\mbb Z^r$ in $\pi_1(\Gamma)$.  This is a normal subgroup of finite
index, corresponding to a finite Galois covering space $\Gamma'$ of
$\Gamma$. By construction,
none of the elements of $\Delta$ lift to loops in $\Gamma'$.  But any
loop in $\Gamma'$ that consists of two vertices and two edges must map to a
loop in $\Gamma$ of the same type, since $\Gamma' \to \Gamma$ is a covering
space of bipartite graphs.  So in fact $\Gamma'$ contains no loops of this
type.  
Using Proposition~6.2 of~\cite{HHK11} (which is possible because of the extra
assumption on $\mc P$), there is a split cover $\wh X'$ 
of $\wh X$ whose closed fiber $X'$ gives rise to $\Gamma'$.  Thus $\wh X' \to 
\wh X$ has the desired property.  

For the last assertion, notice that if $\Gamma$ is not a tree for the original
set $\mc P$, then enlarging the set $\mc P$ will give a refined graph which is
also not a tree. Since the cover $\wh X'$ constructed above is Galois with group
$\pi_1(\Gamma)/N = \mbb Z^r/n\mbb Z^r$, it is abelian and its degree $nr$ can be
chosen to be arbitrarily large, by choosing $n$ large.
\end{proof}

\begin{example} \label{Tate}
The simplest non-trivial example of Proposition~\ref{split_cover_tree}
is that of a Tate curve; viz.\ the generic fiber of $\wh X$ is an elliptic
curve and the special fiber is a rational nodal curve.  We let $\mc P$ be the
set consisting of the nodal point. In this case $\Gamma(\wh X,\mc P)$
consists of two vertices connected by two edges; $\wh X$ has non-trivial
cyclic split covers of all degrees $>1$; and distinct branches on any of these
covers $\wh X'$ of $\wh X$ lie on the closures of 
distinct components of the closed fiber.
Cf.~\cite{Saito}, Example~2.7, and \cite{HHK}, Example~4.4.  Also compare
Remark~\ref{weier_rk}(\ref{weier_example}).    
\end{example}

If $\wh X'\to \wh X$ is a split cover and $P'$ is a point on the closed fiber
$X'$ lying over $P\in X$, then the associated fields $F_{P'}'$ and $F_P$ (see
Notation~\ref{notn}) are isomorphic under the natural inclusion (see the
beginning of Section~5
of \cite{HHK11}). Hence we may identify (elements of) these two fields.

\begin{prop} \label{cover_Weierstrass}
Let $F$ be a one-variable function field over the fraction field of a complete
discrete valuation ring $T$, and let $\wh X$ be a normal model for $F$ over $T$.
Then there is a split cover $\wh X' \to  \wh X$, with closed
fiber $X'$ and function field $F'$, such that the following holds:
Given $P \in \mc P$ and $a \in F_P$, then for each $P'\in X'$ lying over $P$, there
exist an element $b \in F'$ and a unit $c \in \wh
R_{P'}'^\times$ satisfying $a=bc\in F_{P'}'= F_P$. 
\end{prop}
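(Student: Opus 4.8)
The plan is to reduce the general statement of Proposition~\ref{cover_Weierstrass} to the already-established Weierstrass preparation result Theorem~\ref{field_Weierstrass}(\ref{small_patch_field}) applied over a suitable split cover, using Proposition~\ref{split_cover_tree} to arrange that the troublesome extra hypothesis (equality of branch valuations) is automatically satisfied upstairs. First I would invoke Proposition~\ref{split_cover_tree} to produce a connected split cover $h:\wh X' \to \wh X$ with the property that distinct branches in $h^{-1}(\mc B)$ at a common point of $\mc P' := h^{-1}(\mc P)$ lie on the closures of different components of $h^{-1}(\mc U)$; note that here I must first enlarge $\mc P$ so that it contains every point of $X$ at which $X$ is not unibranched, as in the proof of Proposition~\ref{split_cover_tree}, and correspondingly enlarge $\mc P'$ to be the preimage. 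The point of this choice is that over $\wh X'$, for any point $P' \in \mc P'$, no two branches at $P'$ lie on a common $U' \in \mc U'$, so the hypothesis of Theorem~\ref{field_Weierstrass}(\ref{small_patch_field}) holds vacuously (cf.\ Remark~\ref{weier_rk}(\ref{weier_special_cases})).

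Next I would fix $P \in \mc P$ and $a \in F_P$, and let $P'$ be any point of $X'$ lying over $P$. Because $h$ is a split cover, the natural inclusion $F_P \hookrightarrow F_{P'}'$ is an isomorphism (as recalled just before the statement), so I may regard $a$ as an element of $F_{P'}'$. Now apply Theorem~\ref{field_Weierstrass}(\ref{small_patch_field}) to $\wh X'$, with its data $\mc P', \mc U', \mc B'$, at the point $P' \in \mc P'$ and the element $a \in F_{P'}'$: since the extra hypothesis is satisfied vacuously by the construction of the cover, there exist $b \in F'$ and $c \in \wh R_{P'}'^\times$ with $a = bc$ in $F_{P'}' = F_P$. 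This is exactly the desired factorization.

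The one point that requires a little care, and which I expect to be the main (though minor) obstacle, is the bookkeeping around enlarging $\mc P$. Proposition~\ref{cover_Weierstrass} is stated in terms of the original $\mc P$ from Notation~\ref{notn}, whereas the application of Proposition~\ref{split_cover_tree} forces an enlargement of $\mc P$ to include all non-unibranched points. I would handle this by observing that enlarging $\mc P$ only refines the decomposition $\mc U$ and adds branches, but does not change the fields $F_P$ for the original points $P$, nor the fields $F_{P'}'$; and the statement we want to prove only concerns points $P'$ lying over points $P$ in the original $\mc P$. So one runs the argument with the enlarged $\mc P$, obtains the split cover $\wh X'$ and the factorization $a = bc$ over $\wh X'$ for each $P'$ over $P$, and then simply notes that this is the assertion for the original $\mc P$ as well, since the enlarged $\mc P$ still contains the original one and the relevant local rings and fields at $P$ and $P'$ are unchanged. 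No further compatibility is needed because the conclusion involves only a single point $P'$ at a time.
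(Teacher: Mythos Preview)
Your proposal is correct and follows essentially the same route as the paper: invoke Proposition~\ref{split_cover_tree} to obtain a split cover on which distinct branches at any $P'$ lie on distinct $U'$, and then apply Theorem~\ref{field_Weierstrass}(\ref{small_patch_field}) upstairs, where the extra hypothesis is vacuous (exactly as in Remark~\ref{weier_rk}(\ref{weier_special_cases})). Your discussion of enlarging $\mc P$ is a harmless elaboration---the enlargement is already absorbed into the proof of Proposition~\ref{split_cover_tree} itself, so the paper simply quotes its conclusion without repeating that step.
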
 

\begin{proof}
By Proposition~\ref{split_cover_tree}, there is a split cover $\wh X' \to  
\wh X$ with the property that distinct branches at any point of $X'$ must lie
on distinct components of $X'$.  Given $a \in F_P=F_{P'}'$, the factorization
$a=bc$ now follows from applying
Theorem~\ref{field_Weierstrass}(\ref{small_patch_field}) to $\wh X'$, 
in the special case in which no two branches $\wp \in \mc B$ at any point $P
\in \mc P$ lie on the closure of the same component of the closed fiber (as noted in
Remark~\ref{weier_rk}(\ref{weier_special_cases})). 
\end{proof}

We may now remove the additional assumption in part~(\ref{root_small}) of
Corollary~\ref{reduce_to_patches} above.

\begin{cor} \label{reduce_to_patches_II}
With notation as in Proposition~\ref{cover_Weierstrass}, let $p \ge 0$ be the
characteristic of the residue field of $T$.    
Then there is a split cover $\wh X' \to \wh X$, with closed
fiber $X'$ and function field $F'$, such that for every $n$ not divisible
by~$p$, the 
following holds:
Given $P\in \mc P$ and $a\in F_P$, then for each $P'\in X'$ lying
over $P$, there exist elements $b\in F'$ and $c\in F_{P'}'^{\times}$ such that
$a=bc^n\in F_{P'}'=F_P$.
\end{cor}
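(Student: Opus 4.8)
The plan is to take exactly the same split cover $h : \wh X' \to \wh X$ produced in Proposition~\ref{cover_Weierstrass} (equivalently, the one furnished by Proposition~\ref{split_cover_tree}), and then to observe that over this cover the extra hypothesis appearing in Corollary~\ref{reduce_to_patches}(\ref{root_small}) is automatically satisfied, so that the conclusion follows by a direct application of that corollary to $\wh X'$. Concretely, $h$ is chosen so that distinct branches in $h^{-1}(\mc B)$ at a common point of $\mc P' := h^{-1}(\mc P)$ lie on the closures of distinct components of $\mc U' := h^{-1}(\mc U)$. Since split covers are \'etale and the induced map of reduction graphs $\Gamma(\wh X',\mc P') \to \Gamma(\wh X,\mc P)$ is a covering space, the set $\mc P'$ is a finite nonempty set of closed points of $X'$ that contains all points where distinct components of $X'$ meet; hence $(\wh X',\mc P',\mc U',\mc B')$ is of the form required in Notation~\ref{notn}, and Theorem~\ref{field_Weierstrass} and Corollary~\ref{reduce_to_patches} may legitimately be invoked over $\wh X'$.

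Next I would fix an integer $n$ not divisible by $p$, a point $P \in \mc P$, an element $a \in F_P$, and a point $P' \in X'$ lying over $P$; under the identification $F_{P'}' = F_P$ recalled just before Proposition~\ref{cover_Weierstrass}, we regard $a$ as an element of $F_{P'}'$. If $\wp,\wp' \in \mc B'$ are branches at $P'$ lying on the closure of a common component of $X'$, then the defining property of $h$ forces $\wp = \wp'$, so the valuation condition $v_\wp(a) = v_{\wp'}(a)$ in the hypothesis of Corollary~\ref{reduce_to_patches}(\ref{root_small}) holds vacuously. Applying that corollary to $\wh X'$ (with $\mc P',\mc U',\mc B'$) therefore yields elements $b \in F'$ and $c \in F_{P'}'^\times$ with $a = bc^n$ in $F_{P'}' = F_P$, which is the desired conclusion.

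The only point that genuinely needs attention is that one and the same split cover $\wh X'$ must work simultaneously for all $n$ coprime to $p$, as the statement demands: this is immediate, since the cover of Proposition~\ref{cover_Weierstrass} depends only on the combinatorics of the reduction graph $\Gamma(\wh X,\mc P)$ and not on $n$, while Corollary~\ref{reduce_to_patches}(\ref{root_small}) applies to $\wh X'$ for every such $n$ once the branch hypothesis is known to be vacuous there. Thus there is no real obstacle here beyond assembling the two earlier inputs; the mild bookkeeping issue is simply to confirm that $\mc P'$ still meets the requirements of Notation~\ref{notn} so that those results are available over $\wh X'$, which is handled by the \'etaleness of $h$ as indicated above.
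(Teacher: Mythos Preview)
Your proposal is correct and is essentially the paper's own argument: the paper says to rerun the proof of Corollary~\ref{reduce_to_patches}(\ref{root_small}) with Proposition~\ref{cover_Weierstrass} substituted for Theorem~\ref{field_Weierstrass}(\ref{small_patch_field}), whereas you equivalently pass to the split cover of Proposition~\ref{split_cover_tree} and apply Corollary~\ref{reduce_to_patches}(\ref{root_small}) directly to $\wh X'$, noting that its branch hypothesis is vacuous there. The only difference is organizational, and your observation that the same cover works for all $n$ coprime to $p$ is exactly the point of the statement.
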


\begin{proof}
The proof is the same as that of part~(\ref{root_small}) of
Corollary~\ref{reduce_to_patches}, using  
Proposition~\ref{cover_Weierstrass} instead of Theorem~\ref{field_Weierstrass}.
\end{proof}

We conclude this section with a result which can be used to show that certain
fields are of the form $F_P$. This will be useful in the applications in the
next section.
\begin{lemma} \label{local-to-global}
Let $k$ be a field, let $T = k[[t]]$, and let $E$ be a
finite separable extension of $k((x,y))$, viewed as a $k$-algebra.
Then there is a connected normal projective $T$-curve $\wh X$ and a closed
point $P$ on $\wh X$ such that $E$ is isomorphic to $F_P$ as a $k$-algebra. 
\end{lemma}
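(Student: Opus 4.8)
The strategy is to realize the local two-dimensional ring (or rather the field $E$) as coming from a completion at a closed point of a global $T$-curve, by first producing a suitable birational model and then normalizing and completing. We are given $E/k((x,y))$ finite separable; note $k((x,y))$ is the fraction field of $k[[x,y]] = k[[x]][[y]]$, and also the completion of the local ring of $\mathbb A^2_k$ at the origin, or of $\mathbb P^1_T$ (with $T = k[[t]]$, $t = y$) at the point $x = y = 0$ on the closed fiber. So the base case $E = k((x,y))$ is immediate: take $\wh X = \mathbb P^1_T$ and $P$ the origin on $\mathbb A^1_k \subset X = \mathbb P^1_k$, so that $\wh R_P = k[[x]][[t]] = k[[x,y]]$ and $F_P = k((x,y))$.

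For the general case, I would first choose a finite extension $B$ of $k[[x,y]]$ with fraction field $E$: since $E$ is separable over $k((x,y))$, one can take $B$ to be the integral closure of $k[[x,y]]$ in $E$, which is a complete normal local two-dimensional domain, module-finite over $k[[x,y]]$ because $k[[x,y]]$ is excellent (being a complete local Noetherian ring). Next I would descend this data to something algebraic: by a standard Artin-approximation or algebraization argument — or more elementarily, by clearing denominators and noting that $E$ is generated over $k((x,y))$ by finitely many elements satisfying polynomial equations whose coefficients lie in $k[[x,y]]$, hence (after approximating) in a localization of $k[x,y]$ — one produces a finite morphism $\wh X_0 \to \mathbb P^1_T$ of integral $T$-schemes, with $\wh X_0$ projective, together with a closed point $P_0$ in the closed fiber lying over the origin, such that the completion of the local ring of $\wh X_0$ at $P_0$ agrees with $B$ (hence its fraction field with $E$). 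The cleanest route here is: realize $E$ as the function field of a finite cover of $\mathbb P^1_T$ branched appropriately, using that a finite extension of $k((x,y))$ extends to a finite extension of the function field $k(x)(t)$ of $\mathbb P^1_T$ that is unramified away from a divisor avoiding a neighborhood — this is exactly the kind of ``local to global'' move made in \cite{HHK11} via \cite[Proposition~3.3]{HHK11} and the splitting constructions there.

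Then I would normalize: let $\wh X$ be the normalization of $\wh X_0$ (equivalently the normalization of $\mathbb P^1_T$ in $E$). This is a normal connected projective $T$-curve with function field $E$, and it is finite over $\mathbb P^1_T$. Let $P$ be the unique point of $\wh X$ lying over $P_0$ whose completed local ring is $B$ — or more precisely, one must check that normalization does not disturb the completion, which holds because $B$ is already normal and complete, hence integrally closed, so the local ring of $\wh X$ at the relevant point has completion $B$. Finally, to match the setup of Notation~\ref{notn} one includes $P$ in the chosen set $\mc P$ of closed points (containing all component-crossing points of the closed fiber), and by construction $F_P = \operatorname{Frac}(\wh R_P) = \operatorname{Frac}(B) = E$ as $k$-algebras.

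**Main obstacle.** The delicate step is the algebraization: passing from the purely complete-local datum $B/k[[x,y]]$ to a global finite cover $\wh X \to \mathbb P^1_T$ realizing it at one point. One must ensure both that such an algebraic model exists (this needs an approximation/algebraization input, e.g. Artin approximation over the excellent ring $k[x,y]$, or the explicit branch-locus constructions of \cite{HHK11}) and that completing the local ring of the global model at the chosen point recovers exactly $B$ and not merely something with the same Henselization or with extra branches. The normality and completeness of $B$ is what makes the last check go through, since an integrally closed complete local ring is its own normalization-completion; but care is needed because a finite cover of $\mathbb P^1_T$ realizing $E$ generically may a priori acquire several points over the origin, and one must select the correct one (or, as in Proposition~\ref{split_cover_tree} and the preceding discussion, arrange the cover so that this is unambiguous).
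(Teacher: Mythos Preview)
Your overall architecture is right—take the integral closure $B$ of $k[[x,y]]$ in $E$, globalize it to a cover of $\mathbb P^1_T$, normalize, and read off $F_P$—but there are two genuine problems.

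\medskip

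\textbf{A conceptual slip about which field is which.} You write ``let $\wh X$ be the normalization of $\mathbb P^1_T$ in $E$'' and ``with function field $E$.'' This cannot be: the function field of $\mathbb P^1_T$ is $k((t))(x)$, while $E$ is finite over $k((x,t)) = \operatorname{Frac} k[[x,t]]$, which is an enormous (not finitely generated) extension of $k((t))(x)$. So $E$ is never the function field of a $T$-curve; it is the fraction field of a \emph{completed} local ring. What you need is a $T$-curve $\wh X$ with function field $F$ (some finite extension of $k((t))(x)$) and a point $P$ at which $\wh R_P \cong B$, hence $F_P \cong E$. You say this correctly in your base case, but the later ``normalize in $E$'' formulation is incoherent and obscures what actually has to be constructed.

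\medskip

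\textbf{The algebraization step is the whole difficulty, and your proposed routes do not work as stated.} Approximating the coefficients of a defining polynomial of $E$ by elements of $k[x,y]$ gives a \emph{different} cover whose completion at the origin need not be $B$ at all. The reference to \cite[Proposition~3.3]{HHK11} is misplaced: that result takes an already-existing $T$-curve and projects it to $\mathbb P^1_T$ with prescribed fiber over $\infty$; it says nothing about producing a global cover from purely local data. A bare appeal to Artin approximation is in the right spirit, but you would need to formulate carefully which functor is being algebraized and why the resulting completion is exactly $B$ (not merely \'etale-locally isomorphic to it), and you have not done so.

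\medskip

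\textbf{How the paper handles it.} The paper first makes a change of coordinates: since the branch locus of $\Spec A \to \Spec k[[x,y]]$ is a proper closed subset, one can choose $n$ so that $A$ is unramified over $(y + x^n)$, and then sends $t \mapsto y + x^n$. This makes $A$ unramified over $t = 0$, which is needed for the next step. Then, in the Galois case $\Gal(E/k((x,t))) = G$, the paper invokes \cite[Lemma~5.2]{HS}, a formal-patching result that produces a $G$-Galois branched cover $\wh X \to \mathbb P^1_T$ whose pullback to $\Spec k[[x,t]]$ is exactly $\Spec A$; in particular there is a unique point $P$ over the origin and $\wh R_P \cong A$. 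The general separable case is reduced to the Galois case by passing to the Galois closure $E^*$ with group $G$, applying the above, and then taking the quotient by the subgroup $H = \Gal(E^*/E)$. The Galois structure is what guarantees the correct point $P$ and the correct completion, sidestepping the ``which point, and does completion recover $B$'' worry you flag in your last paragraph.
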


\begin{proof}
Let $A$ be the integral closure of $k[[x,y]]$ in $E$.  Then $A$ is unramified
over the locus of $(y+x^n)$ for some positive integer $n$.  
Embedding $T$ in $k[[x,y]]$ by sending $t$ to $y+x^n$, we may 
identify $k[[x,y]]$ with $k[[x,t]]$; and $A$ is then unramified over the locus
of $t=0$.  We may also identify $k[[x,t]]$ with $\wh R'_{P'}$, where $P'$ is
the point $x=t=0$ on $\wh X' = \mbb P^1_T$, the projective $T$-line with
coordinate $x$ and function field $F'$. 

In the case that $E$ is Galois over $k((x,y)) = k((x,t))$, say with group $G$,
Lemma~5.2 of \cite{HS} asserts that there is a finite generically separable
morphism $\wh X \to \wh X'$ of connected normal $T$-curves that is  
$G$-Galois as a branched cover (i.e.\ whose corresponding function field
extension is $G$-Galois) and whose pullback via $\Spec(k[[x,t]]) \to \wh X'$
is isomorphic to $\Spec(A) \to \Spec(k[[x,t]])$.  Thus there is a unique
closed point $P$ on $\wh X$ that lies over the point $P' \in \wh X'$, and $\wh
R_P$ is isomorphic to $A$ as a $k[[x,t]]$-algebra with $G$-action.  Hence
$F_P$ is isomorphic to $E$ as a $G$-Galois field extension of $k((x,t))$, and
in particular as a $k$-algebra.

The general case may now be reduced to the above Galois case.  Namely, let
$E^*$ be the Galois closure of the finite separable extension $E/k((x,t))$.
Let $G$ be the Galois group of $E^*/k((x,t))$ and let $H \subset G$ be the
Galois group of $E^*$ over $E$.   
By the Galois case as above, there is a $G$-Galois normal branched cover 
$\pi^*:\wh X^* \to \mbb P^1_T$, say with function field $F^*$, with a unique
closed point $P^*$ on $\wh X^*$ lying over $P' \in \wh X'$, such that $E^*$ is
isomorphic to $F^*_{P^*}$ over $F'_{P'} = k((x,t))$.  Let $\wh X = \wh X^*/H$,  
let $\pi:\wh X \to \mbb P^1_T$ be the induced morphism (through which $\pi^*$
factors), 
and let $P \in \wh X$ be the image of $P^* \in \wh X^*$.
Then $E$ is isomorphic to $F_P$ as a $k((x,t))$-algebra, and in particular as
a $k$-algebra. 
\end{proof}

\section{Applications} \label{applications_qf_Br}

Using what was shown in Section~\ref{Weierstrass}, we can
strengthen results that were obtained in \cite{HHK} concerning quadratic forms
and central simple algebras, by dropping smoothness assumptions.   

Our setup is as in Notation~\ref{notn}.  Given such a discrete valuation ring
$T$ and function field $F$, there are many normal models $\wh X$ of $F$ over
$T$, and in particular there are always regular models (see \cite{Abhyankar},
\cite{Lipman}).  But in general there
need not exist a smooth model of $F$ over $T$.  While normality sufficed for
some of the results in \cite{HHK}, others required smoothness, because they
relied on the form of Weierstrass Preparation that appeared in \cite{HH:FP},
which itself had assumed smoothness.  Our more general form of Weierstrass
Preparation above (Theorem~\ref{field_Weierstrass}) yielded  
Corollaries~\ref{reduce_to_patches} and~\ref{reduce_to_patches_II}, and these in turn will permit us to
generalize results of \cite{HHK}.

\subsection{Applications to quadratic forms}\label{subsec_quadforms}

The classical $u$-invariant of a field $k$, defined by Kaplansky, is the
largest dimension $u(k)$ of a quadratic form over $k$ that is anisotropic
(i.e.\ has no non-trivial zeroes).  While $u(k)$ is known for certain fields,
in general it is rather mysterious. 

In order to obtain results about the value of $u(k)$ in 
\cite{HHK}, we introduced a related invariant $u_s(k)$.  By definition, this
is the smallest number $n$ such that  
$u(E) \le n$ for every finite field extension $E/k$, and also such that
$u(E) \leq 2n$ for every finitely
generated field extension $E/k$ of transcendence degree one. (It was pointed
out to us by Karim Becher that the value of this invariant remains unchanged if
the first of these two conditions is dropped.)

In the geometric situation considered in Notation~\ref{notn}, Corollary~4.17
of \cite{HHK} related the $u$-invariant of a field $F_U$ or $F_P$ to the
values of $u$ and $u_s$ on $k$ and its finite extensions.  
Using our generalized Weierstrass Preparation Theorem, we can remove the
smoothness assumption that was needed in that result in \cite{HHK}.  The proof
here parallels the argument in \cite{HHK}, with Lemma~4.16 of \cite{HHK}
replaced by Corollaries~\ref{reduce_to_patches} and~\ref{reduce_to_patches_II} above.

\begin{thm} \label{u-inv_patches}
Let $T$ be a complete discrete valuation ring with uniformizer $t$, whose
residue field $k$ is not of characteristic two.  
Let $\wh X$ be a normal projective $T$-curve with closed
fiber $X$, let $\mc P$ be a non-empty finite set of closed points of $\wh X$
that contains every point at which distinct components of $X$ meet, and let $\mc U$
be the set of components of the complement of $\mc P$ in $X$. 
\renewcommand{\theenumi}{\alph{enumi}}
\renewcommand{\labelenumi}{(\alph{enumi})}
\begin{enumerate}
\item \label{u-inv_upper_bound}
If $\xi \in \mc P \cup \mc U$, then $u(F_\xi) \le 4u_s(k)$.
\item \label{u-inv_lower_bound}
Let $\tilde X$ be the normalization of $X$.
If $P \in \mc P$ is a closed point of $X$, then $u(F_P) \ge 4u(\kappa(\tilde P))$ for any $\tilde P \in \tilde X$ lying over $P$.
If $Q \in U \in \mc U$
is a closed point of $X$, then $u(F_U) \ge 4u(\kappa(\tilde Q))$ for any $\tilde Q \in \tilde X$ lying over $Q$.
\end{enumerate}
\end{thm}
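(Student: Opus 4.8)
The plan is to follow the strategy of \cite{HHK}, Corollary~4.17, but replacing the appeals to Weierstrass Preparation in the smooth case by the versions proved above. For part~(\ref{u-inv_upper_bound}), I would first reduce the problem of bounding $u(F_\xi)$ to a local-to-global statement about quadratic forms: by scaling, an anisotropic form over $F_\xi$ of dimension $>4u_s(k)$ should be shown to be isotropic. The key input for $\xi = U$ is Corollary~\ref{reduce_to_patches}(\ref{root_big}): every coefficient of the diagonalized form over $F_U$ can be written, modulo squares, as an element of $F$ times a power of a uniformizer. Since $\gcd$ considerations and the reduction mod~$t$ bring us down to quadratic forms over the residue field $k(U)$ (a one-variable function field over $k$) and over finite extensions of $k$, the definition of $u_s$ gives $u(k(U)) \le 2u_s(k)$ and $u(E) \le u_s(k)$ for finite $E/k$; combining the ``unit part'' and the ``uniformizer part'' of the residues via the usual Springer-type / second-residue argument yields the factor of $4$. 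For $\xi = P$, the subtlety is that the extra hypothesis in Theorem~\ref{field_Weierstrass}(\ref{small_patch_field}) may fail, so I would instead pass to a split cover using Corollary~\ref{reduce_to_patches_II}: over the cover one gets the needed factorizations $a = bc^n$ with $n=2$ unconditionally, and since a split cover induces isomorphisms $F_{P'}' \cong F_P$, isotropy over $F_{P'}'$ gives isotropy over $F_P$.

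For part~(\ref{u-inv_lower_bound}), the idea is to exhibit, for a suitable point $\tilde P$ of $\tilde X$ over $P$, an anisotropic form over $F_P$ of dimension $4u(\kappa(\tilde P))$. The field $F_P$ (or $F_U$) is a complete discretely valued field in a suitable sense built over a two-dimensional local ring, and one produces the anisotropic form as a tensor product: take an anisotropic form $q_0$ of dimension $u(\kappa(\tilde P))$ over the residue field of $\tilde P$, and tensor with a ``generic'' two-dimensional anisotropic form $\langle 1, -\pi_1 \rangle \otimes \langle 1, -\pi_2\rangle$ where $\pi_1, \pi_2$ are two independent uniformizers (one being $t$, the other a local equation of the component). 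A repeated application of Springer's theorem for the two discrete valuations shows this $4u(\kappa(\tilde P))$-dimensional form stays anisotropic over $F_P$. The normalization $\tilde X$ enters because at a non-unibranched point $P$ one should lift to a branch, i.e.\ to a point $\tilde P$ of $\tilde X$, where the relevant complete local ring is regular and the two uniformizers genuinely exist.

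The main obstacle I anticipate is the bookkeeping for the upper bound when $\xi = P$ and $X$ is singular at $P$: the residues $v_\wp(a)$ at different branches need not agree, so one cannot diagonalize the form ``coordinate by coordinate'' over $F_P$ directly as in the smooth case. This is exactly the phenomenon flagged in Remark~\ref{weier_rk}(\ref{weier_example}), and the resolution is to absorb it into the split cover of Corollary~\ref{reduce_to_patches_II}; one must then check that the numerical bound $4u_s(k)$ is unaffected by passing to the cover, which holds because the residue field of $\wh X'$ at $P'$ is a finite extension of $\kappa(P)$ and $u_s$ is already defined so as to control all finite extensions of $k$ and all one-variable function fields over them. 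A secondary technical point is verifying that $n=2$ is not divisible by $p = \cha(k)$, which is guaranteed by the standing hypothesis that $k$ is not of characteristic two, so Hensel's Lemma applies as in the proof of Corollary~\ref{reduce_to_patches}.
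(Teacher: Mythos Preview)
Your overall strategy matches the paper's: the new Weierstrass-type corollaries (Corollary~\ref{reduce_to_patches}(\ref{root_big}) for $\xi \in \mc U$, and Corollary~\ref{reduce_to_patches_II} via a split cover for $\xi \in \mc P$) replace \cite{HHK}, Lemma~4.16; and for part~(\ref{u-inv_lower_bound}) your double application of Springer's theorem is exactly what the paper does, citing \cite{HHK}, Lemma~4.9, for each of the two discrete valuations.

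However, your account of how the bound $4u_s(k)$ is reached in part~(\ref{u-inv_upper_bound}) is off in two respects. First, Corollary~\ref{reduce_to_patches}(\ref{root_big}) with $n=2$ gives $a = bc^2$ with $b \in F$ and $c \in F_\xi^\times$, so modulo squares each coefficient already lies in $F$ itself, not ``in $F$ times a power of a uniformizer.'' Second, once the diagonal form is defined over $F$ (or over the function field $F'$ of the split cover in the $\mc P$ case), the paper does \emph{not} perform a Springer-type reduction to $k(U)$. Instead it invokes \cite{HHK}, Theorem~4.10, to get $u_s(K) = 2u_s(k)$ for the fraction field $K$ of $T$; since $F$ (resp.\ $F'$) is a one-variable function field over $K$, the very definition of $u_s$ gives $u(F) \le 2u_s(K) = 4u_s(k)$, and the form is therefore isotropic over $F \subseteq F_\xi$. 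Your proposed route via residues over $k(U)$ would require Springer's theorem for $F_U$ (or $F$) with respect to the $t$-adic valuation, but neither field is complete for that valuation, so this would need a separate henselianity argument; and that route would not actually use the reduction of the coefficients to $F$, leaving the role of Corollary~\ref{reduce_to_patches} unclear. The paper's argument is both shorter and avoids this issue entirely. Incidentally, your worry about whether passing to the split cover affects the bound is resolved by the same observation: $F'$ is again a one-variable function field over $K$, so $u(F') \le 2u_s(K) = 4u_s(k)$ holds for exactly the same reason; the residue field at $P'$ plays no role.
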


\begin{proof} 
(\ref{u-inv_upper_bound})
By the assumption on the characteristic, any quadratic form $q$ over $F_\xi$ may be diagonalized.  So
given a regular quadratic form $q$ of dimension $n > 4u_s(k)$ over $F_\xi$, we may
assume it has the form $a_1x_1^2 + \cdots + a_nx_n^2$ with $a_i \in
F_\xi^\times$.  

If $\xi = U \in \mc U$, then for each $i$ we may write $a_i =
a_i'u_i^2$ for some $a_i' \in F^\times$ and $u_i \in F_U^\times$, by
Corollary~\ref{reduce_to_patches}(\ref{root_big}).   
Hence after adjusting $x_i$ by a factor of $u_i$, we may assume that each $a_i$ lies
in $F^\times$.  Now $u_s(K)=2u_s(k)$ by Theorem~4.10 of   
\cite{HHK}, 
where $K$ is the fraction field of~$T$; 
and so $n > 2u_s(K)$.  Since $F$ is a finitely generated field
extension of $K$ of transcendence degree one, it follows from the definition
of $u_s$ that the $n$-dimensional form $q$ is isotropic over~$F$, and hence
over~$F_U$.

In the case that $\xi \in \mc P$,
Corollary~\ref{reduce_to_patches_II} yields a split cover $\wh X' \to \wh
X$ with function field $F'$ such that each $a_i \in F_P^\times = F_{P'}'^\times$ may be written as
$a_i'u_i^2$ for some $a_i' \in F'^\times$ and $u_i \in F_{P'}'^\times$, where $P'
\in \wh X'$ lies over $P$ (where we again identify $F_P$ with its trivial
extension $F_{P'}'$).   
Adjusting $x_i$ by a factor of $u_i \in F_{P'}'^\times = F_P^\times$, we may
thus assume that each $a_i$ lies in $F'^\times$.  
Since $F'$ is a finitely generated field extension of $K$ of transcendence
degree one, and since $u_s(K)=2u_s(k)$ as above, $q$ is isotropic over $F'$
and hence over $F_{P'}' = F_P$. 

(\ref{u-inv_lower_bound})  
First consider the case that $\xi = U \in \mc U$.   The local ring of $\tilde X$ at $\tilde Q$ is a discrete valuation ring with residue field $\kappa(\tilde Q)$ and fraction field equal to the function field $E$ of $U$.  Also, the localization $\wh R_{U,t}$ of $\wh R_U$ at its Jacobson radical is a discrete valuation ring with residue field $E$ and fraction field $F_U$.  Lemma~4.9 of \cite{HHK} then yields the inequalities $u(F_U) \ge  2u(E) \ge 4u(\kappa(\tilde Q))$, as asserted.

Next, consider the case $\xi = P \in \mc P$.  Let $\tilde \wp$ be the unique branch of $\tilde X$ at $\tilde P$, and let $\wp$ be the branch of $X$ at $P$
that $\tilde \wp$ lies over.  Let $\wh R_{P,\wp}$ be the localization of $\wh R_P$ at~$\wp$.  This is a discrete valuation ring with residue field
$\kappa(\wp)$ and fraction field $F_P$. 
The complete local ring of $\tilde X$ at $\tilde P$ is a discrete valuation ring with residue field $\kappa(\tilde P)$ and fraction field $\kappa(\wp') \cong \kappa(\wp)$.  Lemma~4.9 of \cite{HHK} then yields the desired inequalities $u(F_P) \ge  2u(\kappa(\wp)) \ge 4u(\kappa(\tilde P))$.
\end{proof}

Of course if the closed point $P$ (resp.\ $Q$) in Theorem~\ref{u-inv_patches}
(\ref{u-inv_lower_bound}) is a regular point of the closed fiber $X$, then we can simply consider the residue field at the point itself rather than passing to a point on the normalization $\tilde X$.

As a consequence, we obtain the following (compare Corollary~4.19 of
\cite{HHK}, and Question~4.11 of \cite{Hu}): 

\begin{cor} \label{u-inv_fin_exten_Laurent}
Let $k$ be a field of characteristic unequal to two 
and let $E$ be a
finite separable extension of $k((x,t))$.  
\renewcommand{\theenumi}{\alph{enumi}}
\renewcommand{\labelenumi}{(\alph{enumi})}
\begin{enumerate}
\item \label{u-upper_bound}
Then $u(E) \le 4u_s(k)$.
\item \label{u-value}
If $u(k)=u_s(k)$ and if every finite extension $k'$ of $k$ satisfies
$u(k')=u(k)$, then $u(E)=4u(k)$.   
\end{enumerate}
\end{cor}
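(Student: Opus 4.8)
The plan is to realize $E$ as one of the fields $F_P$ and then invoke Theorem~\ref{u-inv_patches}. By Lemma~\ref{local-to-global} (applied to $E$, with the variable $t$ of the corollary in the role of the second uniformizing variable there), there is a complete discrete valuation ring $T$ with residue field $k$, a connected normal projective $T$-curve $\wh X$, and a closed point $P$ of the closed fiber $X$ of $\wh X$, together with an isomorphism $E \cong F_P$ of $k$-algebras; in particular $E \cong F_P$ as fields, so $u(E) = u(F_P)$. I would then fix a finite non-empty set $\mc P$ of closed points of $X$ containing $P$ together with every point where distinct components of $X$ meet, and let $\mc U$ be the set of connected components of $X \setminus \mc P$; the field $F_P$ is unaffected by this choice, and since $k$ has characteristic $\ne 2$ the triple $(\wh X,\mc P,\mc U)$ is now in the setting of Theorem~\ref{u-inv_patches}.

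For part~(\ref{u-upper_bound}), I would apply Theorem~\ref{u-inv_patches}(\ref{u-inv_upper_bound}) with $\xi = P \in \mc P$, obtaining directly $u(E) = u(F_P) \le 4u_s(k)$.

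For part~(\ref{u-value}), I would use the lower bound in Theorem~\ref{u-inv_patches}(\ref{u-inv_lower_bound}): choosing any point $\tilde P$ of the normalization $\tilde X$ of $X$ lying over $P$, one has $u(F_P) \ge 4u(\kappa(\tilde P))$. Because $\wh X$ is a projective $T$-curve and $T$ has residue field $k$, the closed fiber $X$, and hence its normalization $\tilde X$, is a projective curve over $k$, so $\kappa(\tilde P)$ is a finite field extension of $k$; the hypothesis then gives $u(\kappa(\tilde P)) = u(k)$, whence $u(F_P) \ge 4u(k)$. Combining this with part~(\ref{u-upper_bound}) and the hypothesis $u_s(k) = u(k)$ yields $4u(k) \le u(E) \le 4u_s(k) = 4u(k)$, so $u(E) = 4u(k)$.

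There is no substantial obstacle here: the work is carried entirely by Lemma~\ref{local-to-global} and Theorem~\ref{u-inv_patches}. The only points requiring care are that Lemma~\ref{local-to-global} genuinely produces a model over a complete discrete valuation ring with residue field exactly $k$ (so that Theorem~\ref{u-inv_patches} applies verbatim), that $P$ may be taken to be a closed point of the closed fiber lying in an admissible set $\mc P$, and that $\kappa(\tilde P)$ is finite over $k$; all of these follow immediately from the construction in Lemma~\ref{local-to-global} and the fact that the closed fiber is a projective $k$-curve.
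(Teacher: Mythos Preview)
Your proposal is correct and follows essentially the same approach as the paper: realize $E$ as an $F_P$ via Lemma~\ref{local-to-global}, enlarge $\{P\}$ to an admissible $\mc P$, and then read off both bounds from Theorem~\ref{u-inv_patches}. Your added remarks (that $T=k[[t]]$ has residue field $k$, and that $\kappa(\tilde P)/k$ is finite because $\tilde X$ is a projective $k$-curve) make explicit exactly the points the paper leaves implicit, but the argument is the same.
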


\begin{proof}
Let $\wh X$ and $P$ be as in the conclusion of 
Lemma~\ref{local-to-global} applied to $E$, and extend the set $\{P\}$ 
to a finite 
subset $\mc P \subset X$ that contains all the points where
distinct components of $X$ meet.  Then $u(E) = u(F_P) \le 4u_s(k)$ 
by Theorem~\ref{u-inv_patches}(\ref{u-inv_upper_bound}), proving part
(\ref{u-upper_bound}).  For part (\ref{u-value}), $u(E) \le 4u_s(k) = 4u(k)$
by~(\ref{u-upper_bound}), while the reverse inequality is given by
Theorem~\ref{u-inv_patches}(\ref{u-inv_lower_bound}).
\end{proof}

Recall from \cite{Serre:CG}, II.4.5, that for $d>0$ a $C_d$-\textit{field} is
a field $k$ such that for every $m,n$ and every homogeneous polynomial $f$
over $k$ of degree $m$ in $n$ variables, $f$ has a non-trivial solution over
$k$ if $n>m^d$. 
As was observed after Corollary~4.12 of \cite{HHK}, if $k$ is a $C_d$ field
then $u_s(k) \le 2^d$.  
Hence Corollary~\ref{u-inv_fin_exten_Laurent}(\ref{u-upper_bound}) 
immediately gives:

\begin{cor} 
Let $k$ be a $C_d$ field of characteristic unequal to two.  Let $E$ be a
finite separable extension of $k((x,t))$.  Then $u(E) \le 2^{d+2}$. 
\end{cor}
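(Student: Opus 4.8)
The plan is a one-step reduction to Corollary~\ref{u-inv_fin_exten_Laurent}(\ref{u-upper_bound}). Since $\cha k \ne 2$ and $E$ is a finite separable extension of $k((x,t))$, that corollary applies directly and gives $u(E) \le 4 u_s(k)$. It therefore remains only to show that a $C_d$ field $k$ satisfies $u_s(k) \le 2^d$, which is the observation recorded after Corollary~4.12 of \cite{HHK}; I would simply cite it, but for the reader's convenience I would also recall its short proof.

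To bound $u_s(k)$: by Lang's theorems (see \cite{Serre:CG}, II.4.5), every finite extension of a $C_d$ field is again $C_d$, and every finitely generated field extension of transcendence degree one over a $C_d$ field is $C_{d+1}$. Moreover, over any $C_e$ field $k'$ a diagonal quadratic form in $n$ variables with $n > 2^e$ is a homogeneous polynomial of degree $2$ in more than $2^e$ variables, hence has a nontrivial zero; so $u(k') \le 2^e$. Applying this with $e = d$ to the finite extensions of $k$, and with $e = d+1$ to the finitely generated transcendence-degree-one extensions of $k$, and using the definition of $u_s(k)$ as the least $n$ with $u(E') \le n$ for all finite $E'/k$ and $u(E') \le 2n$ for all such $E'$ of transcendence degree one, we obtain $u_s(k) \le 2^d$.

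Combining the two inequalities yields $u(E) \le 4 u_s(k) \le 4 \cdot 2^d = 2^{d+2}$, as desired. There is no genuine obstacle in this argument; the only points worth keeping in mind are that the hypothesis $\cha k \ne 2$ is precisely what is needed in order to invoke Corollary~\ref{u-inv_fin_exten_Laurent} (and, in the background, to diagonalize quadratic forms), and that the $C_d$ property must be propagated correctly through exactly the two types of field extension that enter the definition of $u_s$.
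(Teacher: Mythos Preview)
Your proposal is correct and matches the paper's approach exactly: the paper deduces the corollary immediately from Corollary~\ref{u-inv_fin_exten_Laurent}(\ref{u-upper_bound}) together with the observation (after Corollary~4.12 of \cite{HHK}) that $u_s(k)\le 2^d$ for $C_d$ fields. Your added recap of why $u_s(k)\le 2^d$ is accurate and a helpful elaboration of what the paper merely cites.
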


More generally, Theorem~\ref{u-inv_patches}(\ref{u-inv_upper_bound}) implies: 
\begin{cor} \label{u-inv_cd}
Under the hypotheses of Theorem~\ref{u-inv_patches}, if $k$ is a $C_d$ field
then $u(F_\xi) \le 2^{d+2}$.   
\end{cor}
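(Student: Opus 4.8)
The proof is immediate from combining two earlier results. The plan is to invoke Theorem~\ref{u-inv_patches}(\ref{u-inv_upper_bound}), which asserts that under the present hypotheses $u(F_\xi) \le 4u_s(k)$ for every $\xi \in \mc P \cup \mc U$, together with the observation (recorded just before Corollary~\ref{u-inv_cd}, and originally noted after Corollary~4.12 of~\cite{HHK}) that a $C_d$ field $k$ satisfies $u_s(k) \le 2^d$. Chaining these two facts gives $u(F_\xi) \le 4u_s(k) \le 4\cdot 2^d = 2^{d+2}$, which is exactly the claimed bound.

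Concretely, the single step I would carry out is to state that since $k$ is assumed to be a $C_d$ field, we have $u_s(k) \le 2^d$, and therefore $u(F_\xi) \le 4u_s(k) \le 2^{d+2}$ by Theorem~\ref{u-inv_patches}(\ref{u-inv_upper_bound}). No geometric input beyond what is already packaged in Theorem~\ref{u-inv_patches} is needed, and no case distinction between $\xi \in \mc P$ and $\xi \in \mc U$ is required here, since the upper bound part of that theorem already handles both uniformly.

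There is essentially no obstacle: the only subtlety, which is entirely dispatched by the cited results, is that the definition of $u_s$ bundles a condition on finite extensions with a condition on transcendence-degree-one extensions, and one must know that the $C_d$ property propagates appropriately to control both — but this is precisely the content of the remark attributed to the discussion after Corollary~4.12 of~\cite{HHK}, so I would simply cite it rather than reprove it. Thus the proof reduces to a one-line deduction.

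\begin{proof}
Since $k$ is a $C_d$ field, we have $u_s(k) \le 2^d$, as observed after Corollary~4.12 of~\cite{HHK}. Hence by Theorem~\ref{u-inv_patches}(\ref{u-inv_upper_bound}), $u(F_\xi) \le 4u_s(k) \le 4 \cdot 2^d = 2^{d+2}$.
\end{proof}
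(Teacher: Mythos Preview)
Your proposal is correct and matches the paper's own approach exactly: the paper simply states that Theorem~\ref{u-inv_patches}(\ref{u-inv_upper_bound}) implies the corollary, relying on the same observation (after Corollary~4.12 of~\cite{HHK}) that $u_s(k)\le 2^d$ for a $C_d$ field.
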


In the situation of this corollary, observe that if $u(\kappa(P))=2^d$
(resp.\ $u(\kappa(Q))=2^d$), then the opposite inequality also holds, by
Theorem~\ref{u-inv_patches}(\ref{u-inv_lower_bound}).  Hence in that case,
$u(F_\xi) = 2^{d+2}$.  In particular, if $k$ is algebraically closed
(resp.\ finite) and $\xi \in \mc P \cup \mc U$, then $u(F_\xi)= 4$
(resp.~$8$), generalizing
\cite[Corollary~4.18]{HHK}.  In the situation that $\xi = P \in \mc P$, these
special cases were shown in~\cite[Theorem~3.6]{COP} and
\cite[Theorem~1.5]{Hu}, respectively. 

We conclude this section with an example.

\begin{example} \label{exa_m-local_u-invariant}
Recall that $k$ is an  \textit{$m$-local field} with
\textit{residue field $k_0$} if there are fields $k_1,\dots,k_m$
with $k_m=k$, such that $k_i$ is the fraction field of an
excellent henselian discrete valuation ring with residue field $k_{i-1}$ for
$i=1,\dots,m$. For example, $k=k_0((z_1))((z_2))\cdots((z_m))$ is such a
field.

Now let $E$ be a finite separable extension of $k((x,t))$, where $k$ is an
$m$-local field whose residue field $k_0$ is algebraically closed
(resp.\ finite) of characteristic not $2$.  Then $u(E)$ is equal to $2^{m+2}$
(resp.\ $2^{m+3})$. 

To see this, first note by induction that 
$2^m u(k_0) \le u(k) \le u_s(k) = 2^m u_s(k_0)$, using \cite{HHK}, Lemma~4.9
and Corollary~4.12.   
If $k_0$ is algebraically closed (resp.\ finite), then $u(k_0)=u_s(k_0)$ and
so $u(k)=u_s(k) = 2^m u(k_0)$, which is equal to $2^m$ (resp.\ $2^{m+1}$).
Moreover the same holds for any finite extension $k'$ of $k$, since $k'$ is 
itself an $m$-local field whose residue field is a finite extension of $k_0$
(by induction).  Hence the hypotheses of
Corollary~\ref{u-inv_fin_exten_Laurent}(\ref{u-value}) are satisfied, and the
conclusion follows. 

As an explicit example, if $E$ is a finite separable
extension of $k_0((u))((x,t))$, then $u(E)$ equals $8$ (resp.\ $16$). 
\end{example}

\subsection{Applications to central simple algebras}\label{subsec_csa}

Corollaries~\ref{reduce_to_patches} and~\ref{reduce_to_patches_II} also have
applications to the period-index problem for Brauer groups.  Below we use
these corollaries to extend results from \cite{HHK}.    

Recall (e.g.\ from~\cite{Pie}) that the Brauer group $\Br(F)$ of a field $F$
consists of equivalence classes of central simple $F$-algebras.  The
\textit{period} of a central simple $F$-algebra $A$, or of its class $\alpha$, is
the order of the class in the Brauer group.  The \textit{index} of $A$ (or of
$\alpha)$ is the degree over $F$ of the division algebra that lies in the
class; or equivalently, the minimal degree of a field extension $L/F$ over
which $A$ splits (i.e.\ such that $A \otimes_F L$ is isomorphic to $\Mat_n(L)$
for some $n$).   
This is also the greatest common divisor of the degrees of the splitting
fields $L$ of $A$. The period always divides the index, and the index always
divides some power 
of the period.  The {\em period-index problem} asks for an exponent $d$
depending only on~$F$ such that all central simple
$F$-algebras~$A$ satisfy $\ind(A)|\;\per(A)^d$.  In asking this question, one
often restricts 
attention to elements whose period is not divisible by a certain prime number.

To help make this question more precise, there is the following terminology
(\cite{HHK}, Definition~1.3; see also \cite{Lieblich}).  The \textit{Brauer
  dimension} of a field $k$ 
(\textit{away from a prime~$p$}) is defined to be $0$ if $k$ is separably  
closed (resp.\ separably closed away from $p=\cha(k)$; i.e.\ the absolute
Galois group of $k$ is a pro-$p$ group).  Otherwise, it is defined to be the
smallest positive integer $d$ such that $\ind(A) | \per(A)^{d-1}$ for every
finite field extension $E/k$ and every central simple $E$-algebra $A$
(resp.\ with $p {\not |}\per(A)$), and also such that $\ind(A) | \per(A)^d$
for every finitely generated field extension $E/k$ of transcendence degree one
and every central simple $E$-algebra $A$  (resp.\ with $p {\not |}\per(A)$).
Note that the latter condition, on transcendence degree one extensions $E/k$,
is automatically satisfied in the case of fields $k$ that are separably closed
(resp.\ away from $p$), because in that situation $\Br(E)$ has no
non-trivial prime-to-$p$ torsion (see the paragraph before Proposition~5.2 of
\cite{HHK}).

Corollary~5.10 of \cite{HHK} related the period-index problem for a field $k$
to that of $F_P$ and $F_U$, where $k$ is the residue field of a complete
discrete valuation ring $T$, and $F_P$ and $F_U$ are the fields associated to
a point or open subset of the closed fiber of a smooth projective $T$-curve.  Using
Corollaries~\ref{reduce_to_patches} and~\ref{reduce_to_patches_II} above, that
result can now be generalized to the case of normal $T$-curves that are not
necessarily smooth.  
Specifically, we have the following result: 

\begin{thm} \label{per-ind-bound}
Let $T$ be a complete discrete valuation ring with residue field $k$ of
characteristic  $p \ge 0$. Let $\wh X$ be a normal projective $T$-curve with
closed fiber $X$, let $\mc P$ be a finite non-empty subset of $X$ that
contains all the points where distinct components of $X$ meet, and let $\mc U$
be the set of components of the complement of $\mc P$ in $X$.  Suppose that
$k$ has Brauer dimension $d$ away from $p$.  Then for every $\xi \in \mc P \cup \mc U$ and
for all $\alpha$ in $\Br(F_\xi)$ with period not divisible by $p$, we have
$\ind(\alpha)\, |\, \per(\alpha)^{d+2}$.  Moreover if $T$ contains a primitive $\per(\alpha)$-th root of unity, then $\ind(\alpha)\, | \,\per(\alpha)^{d+1}$. 
\end{thm}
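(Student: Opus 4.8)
The plan is to reduce the period-index bound over $F_\xi$ to the period-index behavior of $k$ and its extensions, following the strategy of Corollary~5.10 of \cite{HHK}, but substituting our generalized Weierstrass results (Corollaries~\ref{reduce_to_patches} and~\ref{reduce_to_patches_II}) for the smooth-case input. First I would recall the patching principle that a class $\alpha \in \Br(F_\xi)$ with period $n$ prime to $p$ can be analyzed via its restrictions to $F_\wp$ for the branches $\wp$ associated to $\xi$, together with the reduction to the closed fiber; and that splitting $\alpha$ amounts to finding a field extension of controlled degree. The key arithmetic input is that $F_\xi$ is (complete-)discretely-valued-adjacent to function fields of curves over $k$ and to $k$ itself: concretely, the residue field of the relevant valuation ring inside $F_\xi$ is a one-variable function field over $k$ (when $\xi = U$) or a complete discretely valued field whose residue field is such a function field (when $\xi = P$), so two applications of the standard valuation-theoretic bound $\ind \mid \per \cdot \ind(\text{residue class})$ reduce us to the function-field-over-$k$ case, where the Brauer-dimension hypothesis gives $\ind \mid \per^{d}$, producing $\ind \mid \per^{d+2}$ overall.

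Next I would insert the Weierstrass step at precisely the point where one needs to write a symbol or a Kummer-type generator appearing in $\alpha$ in the form $b c^n$ with $b \in F$ (or $b \in F'$ after a split cover) and $c$ a unit in the appropriate local ring. When $T$ contains a primitive $n$-th root of unity, $\alpha$ of period $n$ over $F_\xi$ can be written using symbol algebras $(a_i, b_i)_n$, and Corollary~\ref{reduce_to_patches} (for $\xi = U$ or a branch) or Corollary~\ref{reduce_to_patches_II} (for $\xi = P$, after passing to the split cover $\wh X' \to \wh X$ it provides) lets us replace each entry modulo $n$-th powers by an element of the global field, thereby descending $\alpha$ itself to a class over $F$ (respectively $F'$), which is a one-variable function field over $K = \operatorname{Frac}(T)$. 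One then invokes the period-index bound for such function fields: since $k$ has Brauer dimension $d$ away from $p$, a standard argument (as in \cite{HHK}, Section~5, using that $F$ is a one-variable function field over the complete discretely valued field $K$ whose residue field $k$ has Brauer dimension $d$) gives $\ind \mid \per^{d+1}$ over $F$, and this index survives base change to $F_\xi$, yielding the sharper bound $\ind(\alpha) \mid \per(\alpha)^{d+1}$ in the root-of-unity case.

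To remove the root-of-unity hypothesis for the weaker bound $d+2$, I would adjoin a primitive $n$-th root of unity $\zeta_n$ to $T$; this is a finite extension $T'/T$ of degree prime to $p$ (indeed dividing $\varphi(n)$), inducing $\wh X_{T'} \to \wh X$ and field extensions $F_\xi'/F_\xi$ of degree prime to $p$. Over $F_\xi'$ the root-of-unity case applies and gives $\ind(\alpha_{F_\xi'}) \mid \per(\alpha)^{d+1}$; then a standard restriction-corestriction argument (the index over $F_\xi$ divides $[F_\xi':F_\xi]$ times the index over $F_\xi'$, and the prime-to-$p$ part of the index is all that matters since $p \nmid \per(\alpha)$, hence $p \nmid \ind(\alpha)$) converts this into $\ind(\alpha) \mid \per(\alpha)^{d+1} \cdot [F_\xi':F_\xi]$; absorbing the degree factor costs one more power of $\per(\alpha)$ once one checks $[F_\xi':F_\xi]$ divides an appropriate power — more carefully, one argues prime-by-prime on $\ell \mid \per(\alpha)$, adjoining only $\zeta_{\ell^{v_\ell(\per)}}$, whose degree is prime to $\ell$, so restriction-corestriction gives $\ind_\ell(\alpha) \mid \per(\alpha)^{d+1}$ with no loss — but the union over $\ell$ and the need to re-split after each reduction is exactly where the extra power enters in the general (non-root-of-unity) normalization, giving $d+2$.

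The main obstacle I expect is the bookkeeping in the reduction to the global field: specifically, verifying that after using Corollary~\ref{reduce_to_patches_II} to pass to a split cover $\wh X'$ and rewrite the local symbols, the resulting global class over $F'$ genuinely restricts back to the original $\alpha$ over $F_\xi = F_{P'}'$, and that $F'$ is still a one-variable function field over $K$ (which it is, since a split cover is finite étale away from nothing problematic and preserves the geometric setup). A secondary subtlety is ensuring the valuation-theoretic index inequality over $F$ itself is available with the constant $d+1$ rather than $d$ — this is the standard "one-variable function field over a complete DVR field of Brauer dimension $d$" estimate, which I would cite from the relevant part of \cite{HHK}, Section~5, and which is where the residue-field Brauer dimension $d$ is actually consumed. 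Modulo these citations, the argument is a faithful transcription of the smooth proof with the two Weierstrass corollaries swapped in.
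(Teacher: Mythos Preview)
Your core strategy in the second paragraph matches the paper's: reduce to prime-power period $q^r$, use Merkurjev--Suslin to express $\alpha$ as a product of symbol algebras $(a_i,b_i)_{q^r}$, apply Corollary~\ref{reduce_to_patches} (or Corollary~\ref{reduce_to_patches_II} after a split cover when $\xi = P$) to replace each $a_i, b_i$ by an element of the global function field $F$ (or $F'$), and then invoke the Brauer-dimension bound for one-variable function fields over $K$ (\cite{HHK}, Theorem~5.5) to get $\ind \mid \per^{d+1}$. That part is fine, and the paper does exactly this.

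The gap is in your third paragraph, where you remove the root-of-unity hypothesis. You assert that adjoining $\zeta_{\ell^{v_\ell(\per)}}$ gives an extension of degree prime to $\ell$; this is false once $v_\ell(\per) \ge 2$, since $[K(\zeta_{\ell^r}):K]$ divides $\ell^{r-1}(\ell-1)$ and typically has an $\ell$-part. So a one-step restriction-corestriction does not give ``no loss,'' and your subsequent sentence about the extra power arising from ``the union over $\ell$'' misidentifies where the loss occurs: the Brauer group decomposes primarily, so reassembling across primes costs nothing. The actual loss is the $\ell$-part of the cyclotomic degree at each prime. The paper's fix is to factor the cyclotomic ascent: first pass to $F_\xi(\zeta_q)$, which has degree dividing $q-1$ and hence prime to $q$, so $\ind(\alpha)$ is unchanged there; then pass to $F'_{\xi'}$ containing $\zeta_{q^r}$, of degree $s := [F'_{\xi'}:F_\xi(\zeta_q)]$ dividing $q^{r-1}$. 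One gets $\ind(\alpha) \mid s \cdot \ind(\alpha') \mid q^{r-1} \cdot (q^r)^{d+1}$, which divides $(q^r)^{d+2} = \per(\alpha)^{d+2}$. Your first paragraph's valuation-theoretic sketch is also not how the paper proceeds (and the description of $F_P$ as a complete discretely valued field is inaccurate; $\wh R_P$ is two-dimensional), but since you abandon that line for the correct symbol argument it is not the essential problem.
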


\begin{proof}
Let $F$ be the function field of $\wh X$, let $\xi \in \mc P \cup \mc U$, and
consider $\alpha$ in $\Br(F_\xi)$ as in the assertion.  Observe that we may assume that for every $n>0$ that is not divisible by
$p$, and for every $a \in F_\xi$, there exist $a' \in F$ and $u
\in F_\xi^\times$ such that $a = a' u^n$.  Namely, if $\xi = U \in \mc U$ then
this property holds automatically by  
Corollary~\ref{reduce_to_patches}(\ref{root_big}); and if $\xi =  P \in \mc P$
then we can achieve this condition  
by replacing $\wh X$ by a split cover $\wh X'$ and replacing $F$ by the
function field $F'$ of $\wh X'$, by
Corollary~\ref{reduce_to_patches_II}.  (Again, $F'_{P'} = F_P$ in the notation
of Corollary~\ref{reduce_to_patches_II}, because $\wh X' \to \wh X$ is a split
cover.) 

The remainder of the proof is then the same as that of \cite{HHK},
Corollary~5.10, 
except that the observation in the above paragraph replaces the use of \cite{HHK}, Lemma~4.16.  For the convenience of the reader, we sketch the argument.

Writing the Brauer group as a direct product of its primary parts, we may
assume that $\per(\alpha)$ is a prime power $q^r$.
First consider the case that $F$ contains a primitive $q^r$-th root of unity.
Under this hypothesis, it follows from \cite{MerSus} that $\alpha$ can be
expressed as a tensor product    
$(a_1, b_1)_{q^r} \otimes \cdots \otimes~(a_m, b_m)_{q^r}$
of symbol algebras, 
where each $a_i,b_i \in F_{\xi}$.  Applying the observation in the first
paragraph of the proof, we may assume that each $a_i$ and $b_i$ lie in $F$.
By \cite{HHK}, Theorem~5.5 (or \cite{Lieblich}, Theorem~6.3), the fraction
field $K$ of $T$ has Brauer dimension at most $d+1$ away from $p$; and so
$\ind(\alpha)$ divides $\per(\alpha)^{d+1}$, completing the proof in this
case.  In the more general case, let $F' = F[\zeta_{q^r}]$; let $\wh X' = \wh
X \times_T T[\zeta_{q^r}]$ (which is \'etale over $\wh X$); and let $\alpha'$
be the induced element of $\Br(F'_{\xi'})$, for $\xi'$ on $\wh X'$ lying over
$\xi$ on $\wh X$.  Then $s:=[F'_{\xi'}:F_\xi(\zeta_q)]$ divides $q^{r-1}$;
$\ind(\alpha)$ divides $s\ind(\alpha')$; and $\per(\alpha')$ divides $q^r =
\per(\alpha)$.  So applying the previous case to $\alpha'$, it follows that
$\ind(\alpha)$ divides $\per(\alpha)^{d+2}$. 
\end{proof}

\begin{cor} \label{csa_laurent_extensions}
Let $k$ be a field of characteristic $p \ge 0$ having Brauer dimension $d$ away from~$p$, and let $E$
be a finite separable extension of $k((x,t))$.  If the period of $\alpha \in
\Br(E)$ is not divisible by $p$, and if $k$ contains a primitive $\per(\alpha)$-th root of unity, then $\ind(\alpha)\, | \,\per(\alpha)^{d+1}$. 
\end{cor}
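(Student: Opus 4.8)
The plan is to deduce this corollary from Theorem~\ref{per-ind-bound} by the same mechanism used to pass from Theorem~\ref{u-inv_patches} to Corollary~\ref{u-inv_fin_exten_Laurent} in the quadratic forms setting, namely via the local-to-global realization provided by Lemma~\ref{local-to-global}. First I would apply Lemma~\ref{local-to-global} to the field $E$: since $E$ is a finite separable extension of $k((x,t)) = k((x,y))$ (after the identification of $k[[x,y]]$ with $k[[x,t]]$ in that lemma), with $T = k[[t]]$, there is a connected normal projective $T$-curve $\wh X$ and a closed point $P$ on $\wh X$ with $F_P \cong E$ as a $k$-algebra. In particular $\Br(F_P) \cong \Br(E)$ compatibly with period and index, so it suffices to bound $\ind(\alpha)$ for $\alpha \in \Br(F_P)$.

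Next I would set up the geometric hypotheses of Theorem~\ref{per-ind-bound}: enlarge $\{P\}$ to a finite non-empty subset $\mc P \subset X$ of the closed fiber that contains all points where distinct components of $X$ meet (this is always possible, and does not change $F_P$), and let $\mc U$ be the set of connected components of $X \setminus \mc P$. The residue field of $T = k[[t]]$ is $k$, which by hypothesis has Brauer dimension $d$ away from $p = \cha(k)$. Then $P \in \mc P \subset \mc P \cup \mc U$, the period of $\alpha$ is not divisible by $p$, and $T = k[[t]]$ contains a primitive $\per(\alpha)$-th root of unity because $k$ does (roots of unity in $k$ lift trivially into $k[[t]]$, being already there). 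So the second assertion of Theorem~\ref{per-ind-bound} applies and gives $\ind(\alpha) \mid \per(\alpha)^{d+1}$, which under the isomorphism $\Br(F_P) \cong \Br(E)$ is exactly the desired conclusion for $E$.

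There is essentially no obstacle here: this is a formal packaging of the already-proved theorem, and the only points that require a word of care are (i) checking that $k[[t]]$ contains the relevant root of unity — which is immediate since $k \subset k[[t]]$ — and (ii) making sure the enlargement of $\mc P$ leaves $F_P$ unchanged, which is built into Notation~\ref{notn}. I would write the proof in three or four lines, parallel to the proof of Corollary~\ref{u-inv_fin_exten_Laurent}(\ref{u-upper_bound}): apply Lemma~\ref{local-to-global}, extend $\{P\}$ to $\mc P$, invoke Theorem~\ref{per-ind-bound} in the case where a primitive $\per(\alpha)$-th root of unity is present, and transport the conclusion back to $E$ via $E \cong F_P$.
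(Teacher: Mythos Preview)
Your proposal is correct and follows essentially the same approach as the paper: apply Lemma~\ref{local-to-global} to realize $E$ as $F_P$ for a closed point $P$ on a normal projective $T$-curve, enlarge $\{P\}$ to a suitable $\mc P$, and invoke Theorem~\ref{per-ind-bound} with $\xi = P$ using that $k \subset T$ contains the needed root of unity. The paper's proof is the same three-line argument, just stated more tersely.
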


\begin{proof}
Let $\wh X$ and $P$ be as given by Lemma~\ref{local-to-global} applied to $E$,
and choose a finite subset $\mc P$ of the closed fiber $X$ that contains $P$ and
the  points where distinct components of $X$ meet.  Taking $\xi = P$ in
Theorem~\ref{per-ind-bound} then gives the desired conclusion.
\end{proof}
 
\begin{example}
Let $k$ be an $m$-local field (see Example~\ref{exa_m-local_u-invariant}) 
whose residue field $k_0$ is separably closed away from~$p$, where $p :=
\cha(k_0) = \cha(k)\geq 0$. 
 If $E$
is a finite separable extension of $k((x,t))$, then $\ind(\alpha)$ divides
$\per(\alpha)^{m+1}$ for elements $\alpha \in \Br(E)$ for which $\per(\alpha)$ is not divisible by $p$.  
Namely, $k_0$ has Brauer dimension $0$ away from $p$; and so \cite{HHK}, Corollary~5.7, says that $k$ has Brauer dimension $m$ away from $p$.  Hence the assertion follows from Corollary~\ref{csa_laurent_extensions}.

Similarly, if $k$ is an $m$-local field of characteristic $p>0$ whose residue
field $k_0$ is finite, and if $k_0$ contains a
primitive $\per(\alpha)$-th root of unity, then $\ind(\alpha)$ divides
$\per(\alpha)^{m+2}$. 
\end{example}   

\begin{example}
Let $k = k_0(u)((z))$, where $k_0$ is separably closed.  If $E$
is a finite separable extension of $k((x,t))$, then $\ind(\alpha)$ divides
$\per(\alpha)^3$ in $\Br(E)$, provided $\per(\alpha)$ is not divisible by $p
:= \cha(k_0)\geq 0$.  To see this, note that any prime-to-$p$ finite extension of $k_0(u)$ has trivial Brauer group by Tsen's Theorem; and the period equals the index for elements of order not divisible by $p$ in the Brauer group of any one-variable function field over $k_0(u)$, by \cite{deJ}.  So $k_0(u)$ has Brauer dimension one away from $p$.  Thus $k$ has Brauer dimension at most two away from $p$, by \cite{HHK}, Corollary~5.6.  The assertion now follows from Corollary~\ref{csa_laurent_extensions}.

Similarly, if instead $k_0$ is a finite field containing a primitive
$\per(\alpha)$-th root of unity, then $\ind(\alpha)$ divides $\per(\alpha)^4$.
This follows by replacing \cite{deJ} by \cite{Lie}, which allows us to deduce
that $k_0(u)$ has Brauer dimension two away from $p := \cha(k_0)$ and hence
that $k$ has Brauer dimension three away from $p$.
\end{example}
  
Another consequence of Theorem~\ref{per-ind-bound} is the following generalization of \cite[Corollary~5.11]{HHK}, dropping
the smoothness assumption on $\wh X$ that was needed in the earlier result.

\begin{cor} \label{csa_patches_sc}
Under the hypotheses of Theorem~\ref{per-ind-bound}, if $k$ is separably
closed away from $p$, then $\per(\alpha)=\ind(\alpha)$ for elements 
in $\Br(F_\xi)$ of period not divisible by the characteristic of $k$.
\end{cor}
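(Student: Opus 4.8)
The plan is to derive Corollary~\ref{csa_patches_sc} as essentially the $d=0$ special case of Theorem~\ref{per-ind-bound}, handling the exponent bookkeeping carefully. When $k$ is separably closed away from $p$, its Brauer dimension away from $p$ is $0$ by definition, so Theorem~\ref{per-ind-bound} directly gives $\ind(\alpha) \mid \per(\alpha)^{0+2} = \per(\alpha)^2$ for $\alpha \in \Br(F_\xi)$ of period not divisible by the characteristic of $k$; and it gives $\ind(\alpha) \mid \per(\alpha)$ once $T$ contains a primitive $\per(\alpha)$-th root of unity. Since $\per(\alpha)$ always divides $\ind(\alpha)$, the latter statement is exactly $\per(\alpha) = \ind(\alpha)$. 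So the heart of the matter is to remove the root-of-unity hypothesis, i.e.\ to show that when $k$ is separably closed away from $p$, the ring $T$ automatically behaves as if it contained the needed roots of unity, or else to reduce to that case by a degree argument that does not cost anything.

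First I would reduce, as in the proof of Theorem~\ref{per-ind-bound}, to the case that $\per(\alpha)$ is a prime power $q^r$ with $q \ne p$. Then I would pass to $F' = F(\zeta_{q^r})$, $\wh X' = \wh X \times_T T[\zeta_{q^r}]$, and the induced class $\alpha' \in \Br(F'_{\xi'})$ for $\xi'$ over $\xi$; note $\wh X' \to \wh X$ is finite \'etale. The key point is that the residue field $k'$ of $T[\zeta_{q^r}]$ is obtained from $k$ by adjoining a $q^r$-th root of unity, hence is a prime-to-$p$ (indeed, prime-to-$q$, so separable) extension of $k$; since $k$ is separably closed away from $p$, in fact $k' = k$, so $[T[\zeta_{q^r}]:T] = [k((t))(\zeta_{q^r}):k((t))]$ equals $[k(\zeta_{q^r}):k]$... but $k$ being separably closed away from $p$ already contains $\zeta_{q^r}$ (as $q \ne p$), so $T$ itself contains a primitive $q^r$-th root of unity. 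Hence the "moreover" clause of Theorem~\ref{per-ind-bound} applies with $d = 0$, giving $\ind(\alpha) \mid \per(\alpha)^{1}$, and therefore $\per(\alpha) = \ind(\alpha)$.

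The main obstacle, and the only place where care is genuinely needed, is the claim that $k$ separably closed away from $p$ forces $\zeta_{q^r} \in k$ for $q \ne p$: one must argue that the $q^r$-th cyclotomic extension of $k$ is separable (true since $q \ne \cha(k)$) and hence trivial by hypothesis, so that the root of unity is already present in $T$ via $k \hookrightarrow T$... more precisely, since $T$ is a complete discrete valuation ring with residue field $k$, a primitive $q^r$-th root of unity in $k$ lifts canonically to one in $T$ by Hensel's Lemma applied to $X^{q^r} - 1$, whose reduction has the simple root $\bar\zeta$ because $q^r$ is invertible in $k$. Once this is in place, the corollary is immediate. I would therefore present the proof compactly: reduce to prime-power period, invoke Hensel to get $\zeta_{q^r} \in T$, apply the moreover-clause of Theorem~\ref{per-ind-bound} with Brauer dimension $d = 0$, and conclude $\per(\alpha) = \ind(\alpha)$.
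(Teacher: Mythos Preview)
Your overall strategy is right --- this is the $d=0$ case of Theorem~\ref{per-ind-bound}, and one wants to land in the ``moreover'' clause --- but the key step contains a genuine error. You assert that because $k$ is separably closed away from $p$ and $q \ne p$, the field $k$ already contains $\zeta_{q^r}$. That is false. ``Separably closed away from $p$'' means $\Gal(k^{\sep}/k)$ is pro-$p$; it does \emph{not} mean $k$ contains all roots of unity of order prime to $p$. Concretely, take $p=2$ and let $k$ be the union of all $\mbb F_{2^m}$ with $m$ odd; then $\Gal(\bar k/k)\cong\mbb Z_2$ is pro-$2$, so $k$ is separably closed away from $2$, yet $\zeta_3\in\mbb F_4$ does not lie in $k$ because $\mbb F_4$ has even degree over $\mbb F_2$. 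So your sentence ``$k$ being separably closed away from $p$ already contains $\zeta_{q^r}$ (as $q\ne p$)'' does not hold, and the argument as written breaks down.

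The fix is short and is the intended argument behind the paper's unproved corollary. The extension $k(\zeta_{q^r})/k$ is separable (since $q\ne p$), so its Galois group is a finite quotient of the pro-$p$ group $\Gal_k$; hence $[k(\zeta_{q^r}):k]$ is a power of $p$. Since $K$ is complete with residue field $k$ and the cyclotomic extension is unramified, $[K(\zeta_{q^r}):K]$ is the same $p$-power, and therefore $[F_\xi(\zeta_{q^r}):F_\xi]$ divides that $p$-power. As $q\ne p$, this degree is prime to $q$, so passing from $F_\xi$ to $F'_{\xi'}$ over $T'=T[\zeta_{q^r}]$ leaves both $\per(\alpha)$ and $\ind(\alpha)$ unchanged. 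The residue field $k(\zeta_{q^r})$ is still separably closed away from $p$ (an open subgroup of a pro-$p$ group is pro-$p$), so its Brauer dimension away from $p$ is $0$; now the ``moreover'' clause of Theorem~\ref{per-ind-bound} applies over $T'$ and gives $\ind(\alpha)=\ind(\alpha')\mid\per(\alpha')=\per(\alpha)$. Together with $\per\mid\ind$, this yields the equality. Replace your root-of-unity paragraph with this degree-prime-to-$q$ argument and the proof is complete.
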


Here, as in Theorem~\ref{per-ind-bound}, $\xi$ can be in either $\mc P$ or
$\mc U$.  Combining this corollary in the former case with
Lemma~\ref{local-to-global} above, we obtain in 
particular:

\begin{cor}
Suppose that $k$ is separably closed away from $p = \cha(k)$.
If $E$ is a finite separable extension of $k((x,t))$, and if the period of
$\alpha \in \Br(E)$ is not divisible by $p$, then $\per(\alpha)=\ind(\alpha)$. 
\end{cor}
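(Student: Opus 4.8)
The plan is to deduce this statement directly from Corollary~\ref{csa_patches_sc} together with Lemma~\ref{local-to-global}, exactly as the corollary preceding it is proved from Theorem~\ref{per-ind-bound}. First I would apply Lemma~\ref{local-to-global} to the finite separable extension $E$ of $k((x,t))$ (identifying $k((x,t))$ with $k((x,y))$ via the evident isomorphism): this produces a connected normal projective $T$-curve $\wh X$, where $T = k[[t]]$, together with a closed point $P$ on $\wh X$ such that $E \cong F_P$ as $k$-algebras. Since a Brauer-group element and its period and index are invariants of the field up to isomorphism, it suffices to prove $\per(\alpha) = \ind(\alpha)$ for the corresponding element of $\Br(F_P)$.

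Next I would put $\wh X$ and $P$ into the setting of Notation~\ref{notn}: enlarge $\{P\}$ to a finite non-empty set $\mc P$ of closed points of the closed fiber $X$ that contains $P$ as well as every point where distinct irreducible components of $X$ meet, and let $\mc U$ be the set of connected components of $X \setminus \mc P$. The residue field of $T = k[[t]]$ is $k$, which by hypothesis is separably closed away from $p = \cha(k)$, and the period of $\alpha \in \Br(E) = \Br(F_P)$ is not divisible by $p$. Thus all the hypotheses of Corollary~\ref{csa_patches_sc} are met with $\xi = P \in \mc P$.

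Applying Corollary~\ref{csa_patches_sc} then gives $\per(\alpha) = \ind(\alpha)$ in $\Br(F_P)$, and transporting back along the isomorphism $F_P \cong E$ yields the claim for $\alpha \in \Br(E)$. There is essentially no obstacle here: the entire content has been absorbed into Theorem~\ref{per-ind-bound} (which treats the period-index bound for the fields $F_\xi$ attached to a normal, not necessarily smooth, $T$-curve) and into Lemma~\ref{local-to-global} (which realizes an arbitrary finite separable extension of $k((x,y))$ as some $F_P$). The only minor point to be careful about is the identification $k((x,t)) \cong k((x,y))$ and the compatibility of the $k$-algebra structures, which is immediate. If anything deserves a word of caution it is simply the bookkeeping in Lemma~\ref{local-to-global} that the point $P$ can always be included in a legitimate choice of $\mc P$, but this is clear since one is free to enlarge $\mc P$.
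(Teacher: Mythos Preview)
Your proposal is correct and follows essentially the same approach as the paper: the corollary is stated as an immediate consequence of combining Corollary~\ref{csa_patches_sc} (in the case $\xi = P \in \mc P$) with Lemma~\ref{local-to-global}, exactly as you outline. The paper also notes, as an aside, that the same conclusion can alternatively be extracted from \cite{COP}, Theorem~2.1.
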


This can also be deduced from \cite{COP}, Theorem~2.1, where it was shown that
a class in $\Br(E)$ of period $n$ not divisible by $p$ represents a cyclic
algebra of index $n$, provided that $k$ is separably closed.  This immediately
gives the above assertion for such fields $k$.  More generally, if $k$ is
separably 
closed away from $p$, then the conclusion follows from \cite{COP},
Theorem~2.1, by taking the compositum of $E$ with the separable closure of
$k$, and using that the absolute Galois group of $k$ is a pro-$p$-group.  

\medskip

\bigskip

\noindent {\bf Author Information:}\\

\noindent David Harbater\\
Department of Mathematics, University of Pennsylvania, Philadelphia, PA 19104-6395, USA\\
email: harbater@math.upenn.edu
\medskip

\noindent Julia Hartmann\\
Lehrstuhl f\"ur Algebra, RWTH Aachen University, 52062 Aachen, Germany\\
email: hartmann@matha.rwth-achen.de

\medskip

\noindent Daniel Krashen\\
Department of Mathematics, University of Georgia, Athens, GA 30602, USA\\
email: dkrashen@math.uga.edu

\medskip

\noindent The first author was supported in part by NSF grant DMS-0901164. 
The second author was supported by the German Excellence Initiative via RWTH Aachen University and by the German National Science Foundation (DFG).
The third author was supported in part by NSA grant H98130-08-0109
and NSF grant DMS-1007462.

\end{document}